\documentclass[12pt, reqno]{amsart}
\usepackage{amsmath, amsthm, amscd, amsfonts, amssymb, graphicx, color}
\usepackage{mathrsfs}
\usepackage[bookmarksnumbered, colorlinks, plainpages]{hyperref}
\usepackage{tikz}
\usepackage{xcolor}
\usepackage{subcaption}
\usetikzlibrary{decorations.pathreplacing,angles,quotes}

\definecolor{cof}{RGB}{219,144,71}
\definecolor{pur}{RGB}{186,146,162}
\definecolor{greeo}{RGB}{91,173,69}
\definecolor{greet}{RGB}{52,111,72}
\usepackage{xcolor}
\usepackage{enumerate}
\hypersetup{colorlinks=true,linkcolor=red, anchorcolor=green, citecolor=cyan, urlcolor=red, filecolor=magenta, pdftoolbar=true}

\newtheorem{theorem}{Theorem}[section]
\newtheorem{lemma}[theorem]{Lemma}

\newtheorem{cor}[theorem]{Corollary}
\newtheorem{definition}[theorem]{Definition}

\theoremstyle{remark}
\newtheorem{remark}[theorem]{\bf{Remark}}
\numberwithin{equation}{section}
\allowdisplaybreaks
\begin{document}

\title[$q$-Berezin sectorial operators and $q$-Berezin ranges]{$q$-Berezin sectorial operators with applications to $q$-Berezin number inequalities and  $q$-Berezin ranges}

\author[S. Mahapatra, S. Halder,  R. Birbonshi, A. Patra  and K. Paul]{Saikat Mahapatra,  Subhadip Halder,  Riddhick Birbonshi, Arnab Patra and Kallol Paul }
	
\address[Mahapatra]{Department of Mathematics, Jadavpur University, Kolkata 700032, West Bengal, India.}
\email{smpatra.lal2@gmail.com}

\address[Halder]{Department of Mathematics, Jadavpur University, Kolkata 700032, West Bengal, India.}
\email{subhadiphalderju@gmail.com }

\address[Birbonshi] {Department of Mathematics, Jadavpur University, Kolkata 700032, West Bengal, India.}
\email{riddhick.math@gmail.com}

\address[Patra] {Department of Mathematics,
Indian Institute of Technology Bhilai, 491001,
Chhattisgarh, India.}
\email{arnabp@iitbhilai.ac.in}

\address[Paul] {Vice-Chancellor, Kalyani University, West Bengal 741235 and 
		Professor (on lien), Department of Mathematics, Jadavpur University, Kolkata 700032, West Bengal, India.}
	\email{kalloldada@gmail.com}

\subjclass[2020]{Primary: 47A30, 47B12, 47B32; Secondary:  30H20}

\keywords{$q$-Berezin range, $q$-Berezin number, reproducing kernel Hilbert space,  inequalities, Bergman space.}

\maketitle	

\begin{abstract}  
In this paper, we introduce a new class of operators, called $q$-Berezin sectorial operators, as an extension of the class of $q$-sectorial operators. By presenting examples on the Hardy–Hilbert space, we show that there exist operators which are $q$-Berezin sectorial but not $q$-sectorial. Several new inequalities for the $q$-Berezin number associated with this class of operators are also derived. In addition, we investigate the geometric structure of the $q$-Berezin range for various classes of operators on the Bergman space, including weighted shift and certain composition operators.
 
\end{abstract}

\tableofcontents

\section{Introduction}

 A reproducing kernel Hilbert space $ \mathscr{H}$ over a nonempty set $\Omega$ is a Hilbert space of complex-valued functions on $\Omega$ such that, for every $\lambda \in \Omega$,  the linear evaluation functional on $\mathscr{H}$ given by $\phi \to \phi(\lambda),$ is continuous (see \cite{PR_Book_2016}). 
By the Riesz representation theorem, for each $\lambda \in \Omega$, there exists a unique $ k_\lambda \in \mathscr{H}$ satisfying $\phi(\lambda)=\langle \phi, k_\lambda \rangle,$ for all $\phi \in  \mathscr{H}$. The family $\{k_\lambda :  \lambda \in \Omega \}$ forms the reproducing kernels of $\mathscr{H}$, and the normalized kernels are $\{\hat{k}_{\lambda}=k_\lambda/\|k_\lambda\| :  \lambda\in \Omega\}.$ 
 For  $T \in  \mathscr{B}( \mathscr{H}),$ the space of all bounded linear operators on $\mathscr{H},$ the Berezin transform of $T$ is defined as
$$\widetilde{T}(\lambda)=\langle T\hat{k}_{\lambda},\hat{k}_{\lambda} \rangle~~\text{for all $\lambda \in\Omega$},$$ (see \cite{covariant,BER1}).  The Berezin range of $T$ is defined by 
$\textbf{Ber}(T)= \{\widetilde{T}(\lambda) : \lambda \in \Omega\},$
(see \cite{Reproducing}). For $T \in  \mathscr{B}( \mathscr{H}),$ the Berezin number and Berezin norm are defined by  
$$\textbf{ber}(T)=\sup \left\{|\widetilde{T}(\lambda)| : \lambda \in\Omega \right\}\,\mbox{and}\, \|T\|_{ber}=\sup\left\{|\langle T\hat{k}_{\lambda},\hat{k}_{\mu}\rangle | : \lambda,\mu \in \Omega\right\},$$
(see \cite{BY_JIA_2020,Berezin symbol}). Moreover, $\textbf{Ber}(T)\subseteq W(T)$ and hence $\textbf{ber}(T)\le w(T)$, where
$W(T)$ and $w(T)$ denote the numerical range and numerical radius of $T$ , respectively. It was further shown in \cite{pintuani} that, for positive operators, the Berezin number and the Berezin norm are identical. We also refer the reader to \cite{pintuRacsam,Barik,mahapatra,Majee} for recent developments concerning inequalities involving the Berezin number.

      The concepts of the $q$-Berezin range and $q$-Berezin number for $q\in(0,1]$ are introduced by Stojiljković et al. in \cite{filomatq}. For an operator $T\in\mathscr{B}(\mathscr{H})$, these are defined by \begin{align*}
    \textbf{Ber}_q(T)=&\left\{\langle T \hat{k}_{\lambda},\hat{k}_{\mu}\rangle: \lambda,\mu\in \Omega,\, \langle \hat{k}_{\lambda},\hat{k}_{\mu}\rangle=q\right\}\\
    &\,\,\,\,\,\,\,\,\,\,\,\,\,\,\,\,\,\,\,\,\,\,\,\,\,\,\mbox{and}\\\textbf{ber}_q(T)=&\sup_{ \lambda,\mu\in \Omega}\left\{|\langle T \hat{k}_{\lambda},\hat{k}_{\mu}\rangle|: \langle \hat{k}_{\lambda},\hat{k}_{\mu}\rangle=q\right\}.
\end{align*}
Clearly, $\mathbf{Ber}_q(T)$ is a bounded and nonempty subset of $\mathbb{C}$, and it satisfies the following properties:
\begin{enumerate}[\upshape (i)]
    \item $\textbf{Ber}_q(aT+bI)=a  \textbf{Ber}_q(T)+qb$ for all $a,b \in \mathbb{C}$
    \item $\textbf{Ber}_q(T)\subseteq W_q(T)$
    \item $\textbf{Ber}_q(T^*)=\overline{  \textbf{Ber}_q(T)}$,
\end{enumerate}
 where $W_q(T)$ denotes the $q$-numerical range of $T$. Moreover, when $q=1$, then  $q$-Berezin range and $q$-Berezin number of a bounded linear operator $T$ reduce to the classical Berezin range and classical Berezin number of $T$, respectively, i.e.,  $\textbf{Ber}_q(T)=\textbf{Ber}(T)$ and $\textbf{ber}_q(T)=\textbf{ber}(T)$.   Furthermore, for every $T\in \mathscr{B(H)}$, the inequality $\textbf{ber}_q(T)\le w_q(T)$, where $w_q(T)$ denotes the $q$-numerical radius of $T$, is satisfied. For further details on the $q$-Berezin range, we refer the reader to \cite{Bhattacharya,neq,filomatq}.

An operator is called sectorial whenever its numerical range is contained in a sector of the right half of the complex plane with the origin as its vertex (see \cite{kato}). In this direction, Mahapatra et al. \cite{saikat} introduced the notion of Berezin sectorial operators, where the Berezin range is assumed to lie within such a sector. This framework has proven useful in obtaining sharper estimates for the Berezin number. 
Motivated by these developments, particularly the concepts of Berezin sectorial operators and $q$-Berezin structures, we introduce a new class of operators, namely $q$-Berezin sectorial operators. This class generalizes the family of  $q$-sectorial operators, where a bounded linear operator is said to be $q$-sectorial if its $q$-numerical range is contained in a sector (see \cite{Kittaneh}).

The Berezin range, unlike the numerical range, is not necessarily convex. The study of the convexity of the Berezin range for composition operators was initiated in \cite{convexity} and subsequently extended to several other classes of operators; see \cite{Composition,Bulletin des,Reproducing,berezin toeplitz}. This naturally raises the question of the convexity of the $q$-numerical range and the $q$-Berezin range for a bounded linear operator. It is known that the $ q$-numerical range of a bounded linear operator is convex; see \cite{gau}.   More recently,   Bhattacharya and Patra \cite{Bhattacharya} have investigated the geometric structure of the $q$-Berezin range for various classes of operators on the Hardy-Hilbert space . Motivated by these developments, we examine the geometric structure of the $q$-Berezin range for several classes of operators on the Bergman Space.

This article is organized as follows. Including the introduction, it consists of two sections. Section \ref{s2} introduces a new class of operators, termed $q$-Berezin sectorial operators, which extends the set of all  $q$-sectorial operators. We present several examples on Hardy–Hilbert space to illustrate the motivation and scope of this class.  Through examples on the Hardy–Hilbert space, we show the existence of operators that are $q$-Berezin sectorial but not $q$-sectorial. We also show that the $q$-Berezin sectorial index may be strictly smaller than the classical $q$-sectorial index. These observations provide a  motivation for the study of $q$-Berezin number inequalities for $q$-Berezin sectorial operators from a geometric perspective and allows
to develop $q$-Berezin number inequalities to operators that are not classically $q$-sectorial. We derive several bounds for the $q$-Berezin number associated with these operators, improving and extending several known results.
In Section \ref{s3}, we investigate the convexity of the q-Berezin range and its symmetry with respect to the real and imaginary axes for several classes of operators, including weighted shift operators and composition operators on the Bergman space.

\section{$q$-Berezin sectorial operators }\label{s2}
In this section, we first introduce  $q$-Berezin  sectorial operators, and to do so, we first  recall the definition of a $q$-sectorial matrix from \cite{Kittaneh}.

  \begin{definition} Let  $\theta\in[0,\frac{\pi}{2})$, and consider the sector $S_\theta=\{z\in\mathbb{C}: \,\, \Re(z)>0 \, \mbox{and}\, |\Im(z)|\le \tan \theta\,\Re(z) \},$
 with vertex at the origin and semi-angle $\theta$. An $n\times$n  matrix $M$  is said to be $q$-sectorial, where $q\in (0,1]$, if its $q$-numerical range is contained entirely in the sector $S_{\theta}$, that is, $W_q(M)\subseteq S_\theta$.   The smallest value of $\theta$ for which $W_q(M)\subseteq S_\theta$ is called the $q$-sectorial index of $M$.
\end{definition}

 For any $z\in \mathbb{C}$ and  $\theta\in [0,\frac{\pi}{2})$, we have  $|\arg(z)|\le \theta $ if and only if $\Re(z)>0$ and $|\Im(z)|\le \tan \theta\,\Re(z)$. Now, for $q\in(0,1]$, an operator  $T\in \mathscr{B(H)}$ is called $q$-sectorial operator  if  its $q$-numerical range satisfies $W_q(T)\subseteq S_\theta$, where $\theta\in[0,\frac{\pi}{2})$.

\begin{definition}
 Let $q\in(0,1]$ and  $\theta\in[0,\frac{\pi}{2})$. Consider the sector
  $S_\theta=\{z\in\mathbb{C}: \,\, |\arg z|\le \theta\}$.  A linear operator $T$ acting on a
 reproducing kernel Hilbert space $\mathscr{H}$ is said to be  $q$-Berezin sectorial with vertex at origin and the semi-angle $\theta\in[0,\frac{\pi}{2})$, if its $q$-Berezin range satisfies $\textbf{Ber}_q(T)\subseteq S_\theta$, that is, the $q$-Berezin range of $T$ lies entirely within the sector $S_{\theta}$
	of the right half-plane having vertex at the origin and semi-angle $\theta$. 
 The collection of all such operators will be denoted by $\Pi^{\textbf{Ber}_q}_{\theta}.$ The smallest value of $\theta$ for which $\textbf{Ber}_q(T)\subseteq S_\theta$ is called the $q$-Berezin sectorial index of $T$.
\end{definition}

 In particular, when $q=1$, the notion of a $q$-Berezin sectorial operator coincides exactly with the class of Berezin sectorial operators introduced in \cite{saikat}. Hence, the class of $q$-Berezin sectorial operators provides a natural extension of Berezin sectorial operators.
Moreover, it follows clearly from the definitions that for any $q\in(0,1]$, every $q$-sectorial operator is a $q$-Berezin sectorial operator. However, the converse is not true; that is, not every $q$-Berezin sectorial operator is $q$-sectorial.

To illustrate the notion of $q$-Berezin sectorial operators, we begin with the multiplication operator on Hardy–Hilbert space. 
The Hardy–Hilbert space over the open unit disk $\mathbb{D}$, denoted by $H^2(\mathbb{D})$, consists of all analytic functions whose Taylor coefficients are square-summable, that is,
$$H^2(\mathbb D)=\left\{f : f(z)=\sum_{n=0}^{\infty}a_nz^n~~\text {with}~~ \sum_{n=0}^{\infty}|a_n|^2 < \infty\right\}.$$ 
The space $H^2(\mathbb D)$ is a reproducing kernel Hilbert space. Its reproducing kernel, known as the Szeg\H{o} kernel, is defined for each  $w \in \mathbb D$  by
$$k_w(z)=\frac{1}{1-\bar wz}~~\text{for all $z \in \mathbb D$}.$$
 The set of multipliers on $H^2(\mathbb D)$ is defined as 
$$ \mathcal{M}(H^2(\mathbb{D}))=\big\{g\in H^2(\mathbb{D}): gf \in H^2(\mathbb{D}), \,\,\mbox{for all} \,f\in H^2(\mathbb{D})\big\}. $$ It is well-known that $\mathcal{M}(H^2(\mathbb{D}))=H^{\infty}(\mathbb{D})$. For $g\in H^{\infty}(\mathbb{D})$, the multiplication operator $M_g$ is defined by \[M_g(f)(z)=g(z)f(z).\] 
Let $g(z)=az+b$, where $a,b\in \mathbb{C}$, and $q\in(0,1]$. It was shown in \cite{Bhattacharya} that the $q$-Berezin range of the multiplication operator $M_{az+b}$ is the open disk centered at $bq$ and radius $|a|q$.\\
On the other hand,  since $M_z$ acts as the unilateral shift on $H^2(\mathbb D)$, it follows from \cite{gau} that the   $q$-numerical range  $W_q(M_{az+b})$  is the closed circular disc centered at $bq$ with radius $|a|$ for $q\in(0,1)$. In the case $q=1$, $W_q(M_{az+b})$  is the open disk centered at $b$ with radius $|a|$.

For the multiplication operator $M_{\phi(z)}$ with symbol $\phi(z)=(1+i)(1+z)$ and  $q=\frac{1}{2}$, Figure 1 provides the $q$-Berezin range as an open disc contained within the $q$-numerical range, which is a closed disc.

\newpage
 
\begin{figure}[h]
\centering
\includegraphics[width=8cm]{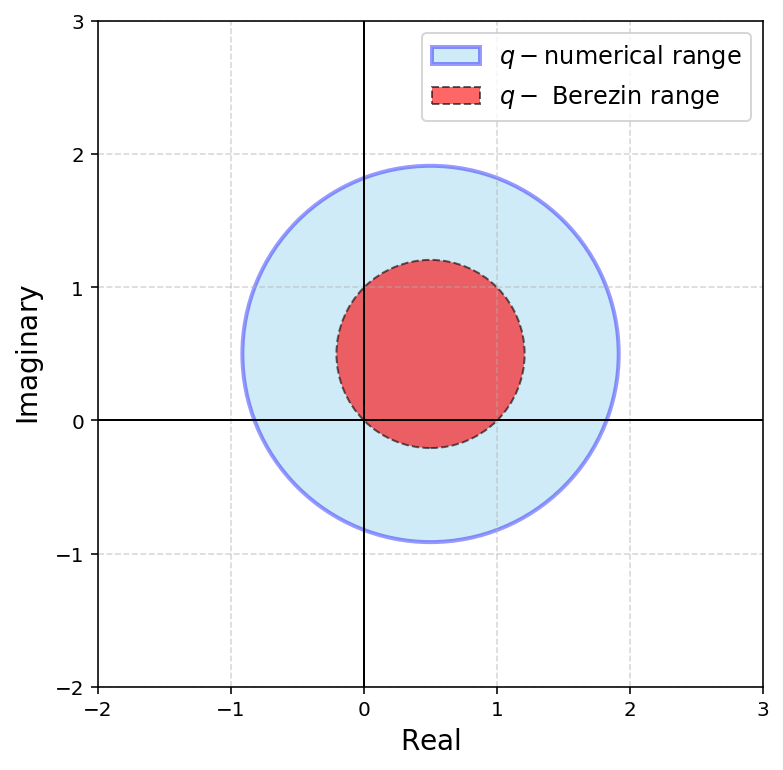}
\caption{$\textbf{Ber}_q(M_{\phi(z)})$ and $W_q(M_{\phi(z)})$ for $q=\frac{1}{2}$ and $\phi(z)=(1+i)(1+z)$. }
\label{fig:sample_plot}
\end{figure}

Figure 2 provides the same for the operator $M_{\phi(z)}+I$, where $\phi(z)=(1+i)(1+z)$ and  $q=\frac{1}{2}$. From the figure, it is clear that this operator is not $q$-sectorial. However $\textbf{Ber}_q(M_{\phi(z)}+I)\subseteq S_{\theta}$, where $\theta\approx\frac{\pi}{2.735}$. Therefore it is $q$-Berezin sectorial with index approximately $\frac{\pi}{2.735}.$

\begin{figure}[h]
\centering
\includegraphics[width=8cm]{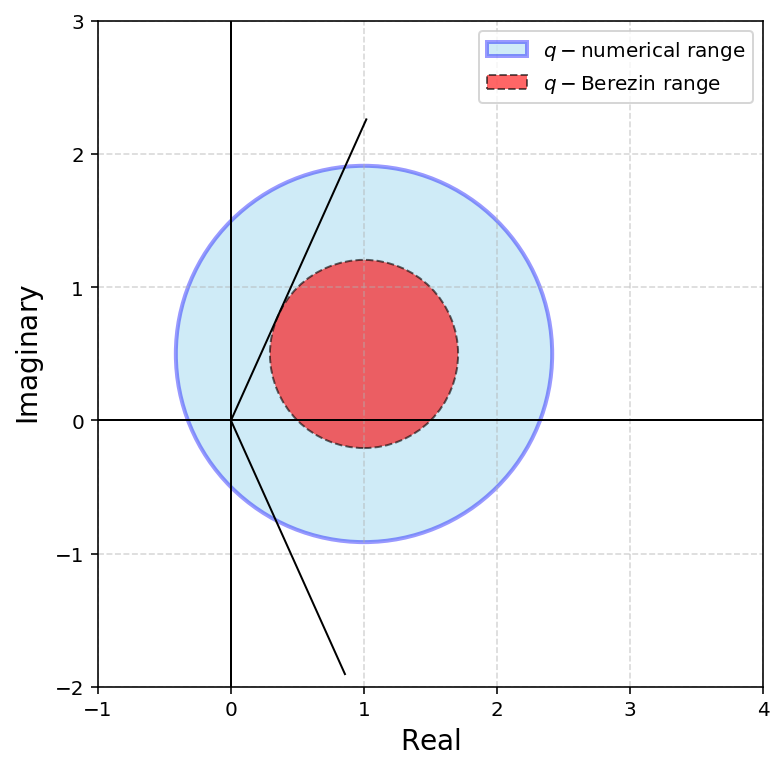}
\caption{$\textbf{Ber}_q(M_{\phi(z)}+I)$ and $W_q(M_{\phi(z)}+I)$ for $q=\frac{1}{2}$ and $\phi(z)=(1+i)(1+z)$. }
\label{fig:sample_plot}
\end{figure}

Figure 3 provides the same for the operator $M_{\phi(z)}+\frac{5}{2}I$, where $\phi(z)=(1+i)(1+z)$ and  $q=\frac{1}{2}$. From the figure, it is clear that this operator is $q$-sectorial with $q$-sectorial index approximately $\frac{\pi}{2.688}$ and is also $q$-Berezin sectorial with index approximately $\frac{\pi}{4.637}.$

\begin{figure}[h]
\centering
\includegraphics[width=8cm]{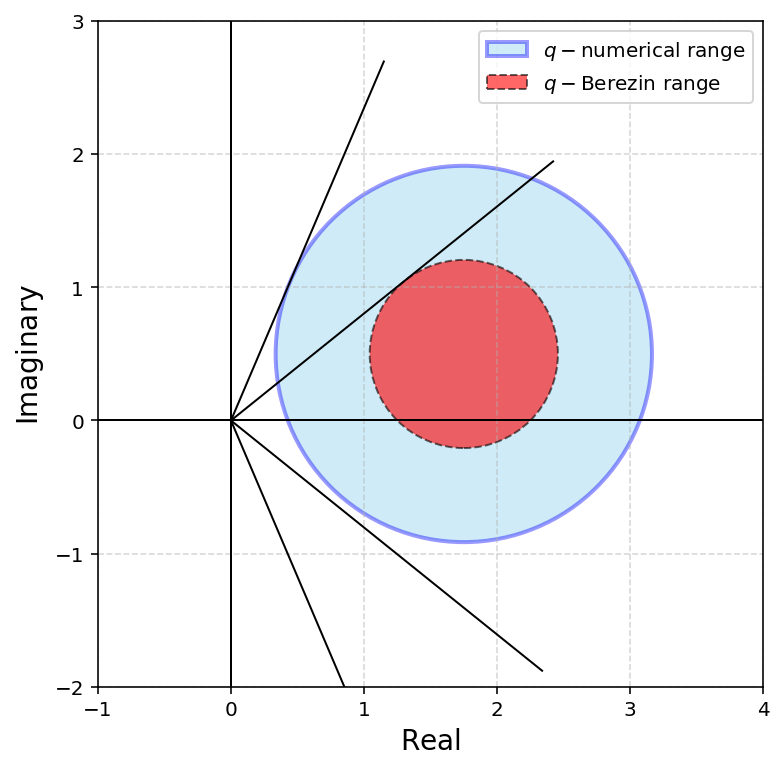}
\caption{$\textbf{Ber}_q(M_{\phi(z)}+\frac{5}{2}I)$ and $W_q(M_{\phi(z)}+\frac{5}{2}I)$ for $q=\frac{1}{2}$ and $\phi(z)=(1+i)(1+z)$. }
\label{fig:sample_plot}
\end{figure}

These examples show that operators which are not $q$-sectorial can still be represented as $q$-Berezin sectorial operators. Moreover, the $q$-Berezin sectorial index is often strictly smaller than the $q$-sectorial index, as seen in the case of the operator $M_{\phi(z)}+\frac{5}{2}I$, where $\phi(z)=(1+i)(1+z)$ (see Figure 3). As noted earlier, in this case the  $q$-sectorial index is approximately $\frac{\pi}{2.688}$, whereas the $q$-Berezin sectorial index is reduced to approximately $\frac{\pi}{4.637}$. The improved bounds of the $q$-numerical radius for $q$-sectorial matrices obtained in \cite{Kittaneh}, together with the geometric viewpoint of $q$-Berezin sectorial operators introduced above, naturally motivate the development of sharper bounds for the $q$-Berezin number of $q$-Berezin sectorial operators.

Any operator $T\in \mathscr{B(H)}$ possesses a unique representation $T=\Re(T)+i\Im(T)$, where $\Re(T)=\frac{1}{2}(T+T^*)$ and $\Im(T)=\frac{1}{2i}(T-T^*)$. This representation is known as the Cartesian decomposition of $T$.
Next, we establish several inequalities involving the $q$-Berezin number of  $q$-Berezin sectorial operators. We first present the following lemma, which is used to obtain the subsequent results.
\begin{lemma}\label{ber(im)}
    Let $q\in(0,1)$ and $T\in \Pi_{\theta}^{\textbf{Ber}_q}$. Then\label{main}
    \[\left( \frac{\sin\theta}{q}+\frac{4\sqrt{1-q^2}}{q^2}\right)\,\textbf{ber}_q(T)\ge\textbf{ber}(\Im(T)). \]
\end{lemma}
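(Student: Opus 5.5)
The plan is to compare, for a fixed $\lambda\in\Omega$, the Berezin transform $\widetilde{T}(\lambda)$ with an element of $\textbf{Ber}_q(T)$ built from $\lambda$. The imaginary part of that element is controlled by the sector, and the correction term is controlled by $\textbf{ber}_q(T)$. Since $\widetilde{\Im(T)}(\lambda)=\Im\widetilde{T}(\lambda)$, it suffices to show that for every $\lambda\in\Omega$
\[
q\,|\Im \widetilde{T}(\lambda)|\le \sin\theta\,\textbf{ber}_q(T)+\frac{4\sqrt{1-q^2}}{q}\,\textbf{ber}_q(T),
\]
and then divide by $q$ and take the supremum over $\lambda$.

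First I fix $\lambda$ and pick $\mu\in\Omega$ with $\langle \hat k_\lambda,\hat k_\mu\rangle=q$. I will assume, as is implicit in the setting and holds on $H^2(\mathbb D)$ by continuity of $\mu\mapsto\langle\hat k_\lambda,\hat k_\mu\rangle$, that such a $\mu$ exists for every $\lambda$. Because the inner product is the real number $q$, I can decompose
\[
\hat k_\mu=q\,\hat k_\lambda+\sqrt{1-q^2}\,z,\qquad z\perp \hat k_\lambda,\ \|z\|=1.
\]
Writing $w=\langle T\hat k_\lambda,\hat k_\mu\rangle\in\textbf{Ber}_q(T)$, this gives
\[
w=q\,\widetilde{T}(\lambda)+\sqrt{1-q^2}\,\overline{\langle z,T\hat k_\lambda\rangle}.
\]
Rearranging,
\[
q\,|\Im\widetilde{T}(\lambda)|\le |\Im w|+\sqrt{1-q^2}\,\|T\|.
\]
Since $T\in\Pi^{\textbf{Ber}_q}_\theta$ we have $w\in S_\theta$, so $|\arg w|\le\theta$. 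Hence $|\Im w|=|w|\,|\sin\arg w|\le \sin\theta\,|w|\le\sin\theta\,\textbf{ber}_q(T)$, which produces the first term.

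The remaining, and main, step is to bound $\|T\|$ by $\frac{4}{q}\,\textbf{ber}_q(T)$; this is what produces the coefficient $4\sqrt{1-q^2}/q^2$ after dividing by $q$. The idea is the analogue of the classical estimate relating the norm and the $q$-numerical radius, $\frac{q}{4}\|T\|\le w_q(T)$ (one polarizes $\langle Tx,y\rangle$ over pairs with $\langle x,y\rangle=q$ and splits $T$ into its Cartesian parts). I would try to carry out that polarization argument using only pairs of normalized kernels $(\hat k_\lambda,\hat k_\mu)$ with $\langle\hat k_\lambda,\hat k_\mu\rangle=q$, so that $w_q(T)$ is replaced by $\textbf{ber}_q(T)$.

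This is where I expect the difficulty. A norm cannot in general be recovered from kernel functions alone, so the argument may have to be run with $\|T\|_{ber}$ or with the component $\langle T\hat k_\lambda,z\rangle$, rather than with $\|T\|$. The natural attempt is to eliminate $z$ through $z=(\hat k_\mu-q\hat k_\lambda)/\sqrt{1-q^2}$ and to bound $|\langle T\hat k_\lambda,\hat k_\mu\rangle|$ and $|\widetilde T(\lambda)|$ separately in terms of $\textbf{ber}_q(T)$, using the sector condition. If that fails, I would fall back on the $q$-numerical-radius bound together with whatever comparison between $w_q(T)$ and $\textbf{ber}_q(T)$ the Hardy-space setting permits.
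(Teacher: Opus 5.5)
\paragraph{The gap.} Your reduction is the paper's own argument. You decompose $\hat{k}_{\mu}=q\hat{k}_{\lambda}+\sqrt{1-q^2}\,z$, use the sector to get $|\Im w|\le \sin\theta\,\textbf{ber}_q(T)$, and bound the remainder by $\sqrt{1-q^2}\,\|T\|$. You then leave the decisive step $\|T\|\le \frac{4}{q}\textbf{ber}_q(T)$ open, so the proposal is not a proof.

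The paper closes exactly this step by quoting $\frac{q}{4}\|T\|\le \textbf{ber}_q(T)$ as a known fact. Your suspicion is justified: that inequality is false. On $H^2(\mathbb{D})$ take $P_nf=\langle f,z^n\rangle z^n$. A $q$-pair forces $t=\overline{\lambda}\mu\in\mathbb{R}$ with $\sqrt{(1-|\lambda|^2)(1-|\mu|^2)}=q(1-t)$, which gives $t\ge -\frac{1-q}{1+q}$, and $\langle P_n\hat{k}_{\lambda},\hat{k}_{\mu}\rangle=q(1-t)t^n$. Hence $\textbf{ber}_q(P_n)\to 0$ while $\|P_n\|=1$.

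Your fallbacks do not close the step either:
\begin{itemize}
\item No polarization over kernel pairs can produce the bound, since the bound itself is false.
\item Comparing with $w_q(T)$ goes the wrong way, because $\textbf{ber}_q(T)\le w_q(T)$.
\item Substituting $z=(\hat{k}_{\mu}-q\hat{k}_{\lambda})/\sqrt{1-q^2}$ only returns the tautology $w=q\widetilde{T}(\lambda)+(w-q\widetilde{T}(\lambda))$.
\item Bounding $|\widetilde{T}(\lambda)|$ by $\textbf{ber}_q(T)$ separately is essentially the lemma itself.
\end{itemize}

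\paragraph{The statement fails in this generality.} No argument that uses only this Hilbert-space decomposition can work, because the lemma fails for a general reproducing kernel Hilbert space, even when every point has a $q$-partner. On $\Omega=\{1,2\}$ let $\mathscr{H}=\mathbb{C}^2$ with kernels $k_1=e_1$ and $k_2=qe_1+\sqrt{1-q^2}\,e_2$. Set $Te_1=ie_1+ce_2$ and $Te_2=ce_1$, where $c=(\epsilon-iq)/\sqrt{1-q^2}$ and $\epsilon>0$. Then $\textbf{Ber}_q(T)=\{\epsilon\}$, so $T\in\Pi^{\textbf{Ber}_q}_{0}$ and $\textbf{ber}_q(T)=\epsilon$. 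Yet $\widetilde{T}(1)=i$, so $\textbf{ber}(\Im(T))\ge 1$; note also $\|T\|_{ber}\ge 1$ here.

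\paragraph{What works on concrete spaces.} The missing ingredient is function-theoretic structure of the space. On $A^2(\mathbb{D})$ fix $s\in(0,1)$ and put $G_s(\zeta)=\langle Tk_{s/\overline{\zeta}},k_{\zeta}\rangle$, which is holomorphic on $s<|\zeta|<1$. There are radii $\rho_-<\sqrt{s}<\rho_+$ with $\rho_+\rho_-=s$ and $(1-\rho_+^2)(1-\rho_-^2)=q(1-s)^2$.
\begin{itemize}
\item For $|\zeta|=\rho_{\pm}$, the pair $(s/\overline{\zeta},\zeta)$ is a $q$-pair with value $q(1-s)^2G_s(\zeta)$.
\item For $|\zeta|=\sqrt{s}$, one has $\widetilde{T}(\zeta)=(1-s)^2G_s(\zeta)$.
\end{itemize}
Apply the maximum principle to the harmonic function $\Im G_s$ on the annulus $\rho_-\le|\zeta|\le\rho_+$. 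This gives $|\Im\widetilde{T}(\lambda)|\le \frac{\sin\theta}{q}\,\textbf{ber}_q(T)$ for $\lambda\neq 0$, and at $\lambda=0$ by continuity. That bound is stronger than the lemma. The same argument works on $H^2(\mathbb{D})$: the factors $(1-s)^2$ become $(1-s)$, and the radius condition reads $(1-\rho_+^2)(1-\rho_-^2)=q^2(1-s)^2$.
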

\begin{proof}
Let  $\theta\neq 0$ and let $\hat{k}_{\lambda}$ and  $\hat{k}_{\mu}$ be the normalized reproducing kernels of $\mathscr{H}$ such that  $\langle \hat{k}_{\lambda},\hat{k}_{\mu}\rangle=q$. Then
     \begin{eqnarray}
        \textbf{ber}_q(T)&\ge& \big|\langle T\hat{k}_{\lambda},\hat{k}_{\mu} \rangle \big|\nonumber\\& = & \sqrt{ \big(\Re \langle T\hat{k}_{\lambda},\hat{k}_{\mu}\rangle\big)^2+ \big(\Im\langle T\hat{k}_{\lambda},\hat{k}_{\mu}\rangle\big)^2}\nonumber\\
         & \ge &\sqrt{ \cot^2\theta\,\big(\Im\langle T\hat{k}_{\lambda},\hat{k}_{\mu}\rangle\big)^2+\big(\Im\langle T\hat{k}_{\lambda},\hat{k}_{\mu}\rangle\big)^2}\nonumber\\
          & = & \csc\theta\,\big|\Im\langle T\hat{k}_{\lambda},\hat{k}_{\mu}\rangle\big|\label{yytt}.
     \end{eqnarray} Now, by taking $\hat{k}_{\mu}=q\hat{k}_{\lambda}+\sqrt{1-q^2}v$, for some unit vector $v$   orthogonal to  $\hat{k}_{\lambda}$, we obtain
      \begin{eqnarray}
        \sin\theta\, \textbf{ber}_q(T)&\ge& \big|\Im\langle T\hat{k}_{\lambda},\hat{k}_{\mu}\rangle\big|\nonumber\\
        &=& \big|q\Im\langle T\hat{k}_{\lambda},\hat{k}_{\lambda}\rangle+\sqrt{1-q^2}\Im\langle T\hat{k}_{\lambda},v\rangle\big|\nonumber\\
        &\ge& q\big|\Im\langle T\hat{k}_{\lambda},\hat{k}_{\lambda}\rangle\big|-\sqrt{1-q^2}\big|\Im\langle T\hat{k}_{\lambda},v\rangle\big|\nonumber\\
             &\ge& q\big|\langle \Im(T)\hat{k}_{\lambda},\hat{k}_{\lambda}\rangle\big|-\sqrt{1-q^2}\|T\|\nonumber\\
                  &\ge&  q\big|\langle \Im(T)\hat{k}_{\lambda},\hat{k}_{\lambda}\rangle\big|-\frac{4\sqrt{1-q^2}}{q}\textbf{ber}_q(T)\label{ppaa},
     \end{eqnarray} where the last inequality comes from the fact that $\frac{q}{4}\|T\|\le\textbf{ber}_q(T)$. Finally, taking the supremum over all $\lambda\in\Omega$, we arrive at the desired inequality.\\
If $\theta=0$, then $|\Im\langle T\hat{k}_{\lambda},\hat{k}_{\mu}\rangle|=0$. On the other hand, we have $$ \big|\Im\langle T\hat{k}_{\lambda},\hat{k}_{\mu}\rangle\big|\ge  q\big|\langle \Im(T)\hat{k}_{\lambda},\hat{k}_{\lambda}\rangle\big|-\frac{4\sqrt{1-q^2}}{q}\textbf{ber}_q(T).$$ Combining these two relations yields the desired result.
\end{proof} 

From the definition of the $q$-Berezin symbol and the Cauchy-Schwarz inequality, it follows that $$|\langle T\hat{k}_{\lambda},\hat{k}_{\mu}\rangle|^2\le|\langle T^*T\hat{k}_{\lambda}, \hat{k}_{\lambda}\rangle| \le \||T|^2\|_{ber},$$ where $|T|^2=T^*T$. Consequently,
 $\textbf{ber}_q(T)\le \||T|^2\|_{ber}$ for any operator $T\in \mathscr{B(H)}$. Motivated by this inequality, our objective in the following result is to establish a new reverse-type inequality between the $q$-Berezin number $\textbf{ber}_q(T)$   and the Berezin norm $\||T|^2\|_{ber}$.
\begin{theorem}\label{last31} Let  $q\in (0,1)$,   $T\in\Pi_{\theta}^{\textbf{Ber}_q}$ and $\alpha\in \mathbb{R}$. Then
 \begin{align*}
  \||T|^2\|_{ber}\le &\frac{1}{4}(1+\sin\theta)^2\textbf{ber}_q^2(T) +\frac{1}{2}\inf_{\alpha\in \mathbb{R}}\Big\{\Big(\sqrt{\||T-\alpha qI|^2\|_{ber}}+|\alpha|\sqrt{1-q^2}\Big)^2\\
&+\Big(\sqrt{\||T-i\alpha qI|^2\|_{ber}}+|\alpha|\sqrt{1-q^2}\Big)^2\Big\}.\end{align*}    
\end{theorem}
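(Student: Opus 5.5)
The plan is to bound $\|T\hat{k}_{\lambda}\|^2$ pointwise and then take suprema. Because $|T|^2=T^*T$ is positive, the fact quoted in the introduction from \cite{pintuani} (Berezin number equals Berezin norm for positive operators) gives
\[
\||T|^2\|_{ber}=\textbf{ber}(|T|^2)=\sup_{\lambda\in\Omega}\langle T^*T\hat{k}_{\lambda},\hat{k}_{\lambda}\rangle=\sup_{\lambda\in\Omega}\|T\hat{k}_{\lambda}\|^2 .
\]
So it suffices to bound $\|T\hat{k}_{\lambda}\|^2$ by the right-hand side, for a fixed $\alpha\in\mathbb{R}$ and every $\lambda$, and then take the infimum over $\alpha$. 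I will assume, as the paper implicitly does in Lemma \ref{ber(im)}, that for each $\lambda$ there is a $\mu$ with $\langle \hat{k}_{\lambda},\hat{k}_{\mu}\rangle=q$.

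Fix such a pair and write $x=\hat{k}_{\lambda}$, $y=\hat{k}_{\mu}$ and $w=Tx$. The key identity is
\[
Tx-\alpha y=(T-\alpha qI)x+\alpha(qx-y).
\]
Since $\|qx-y\|^2=1-2q^2+q^2=1-q^2$, and $\|(T-\alpha qI)x\|^2=\langle |T-\alpha qI|^2x,x\rangle\le \||T-\alpha qI|^2\|_{ber}$, this yields
\[
\|w-\alpha y\|\le A:=\sqrt{\||T-\alpha qI|^2\|_{ber}}+|\alpha|\sqrt{1-q^2}.
\]
In the same way, $Tx-i\alpha y=(T-i\alpha qI)x+i\alpha(qx-y)$ gives $\|w-i\alpha y\|\le B$, where $B$ is defined analogously with $T-i\alpha qI$.

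Next I expand the two squared norms:
\[
\|w-\alpha y\|^2=\|w\|^2-2\alpha\Re\langle w,y\rangle+\alpha^2,\qquad \|w-i\alpha y\|^2=\|w\|^2-2\alpha\Im\langle w,y\rangle+\alpha^2 .
\]
Averaging them and writing $s=\Re\langle Tx,y\rangle+\Im\langle Tx,y\rangle$, I get
\[
\|w\|^2=\tfrac12\big(\|w-\alpha y\|^2+\|w-i\alpha y\|^2\big)+\alpha s-\alpha^2\le \tfrac12(A^2+B^2)+\tfrac{s^2}{4},
\]
using $\alpha s-\alpha^2\le s^2/4$ for every real $\alpha$.

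It remains to bound $|s|$ using the sector hypothesis. The point $z=\langle Tx,y\rangle$ lies in $\textbf{Ber}_q(T)\subseteq S_\theta$, so $0\le\Re z\le|z|$ and $|\Im z|\le\sin\theta\,|z|$. Hence $|s|\le(1+\sin\theta)|z|\le(1+\sin\theta)\,\textbf{ber}_q(T)$, which gives
\[
\|T\hat{k}_{\lambda}\|^2\le \tfrac14(1+\sin\theta)^2\textbf{ber}_q^2(T)+\tfrac12(A^2+B^2).
\]
Taking the supremum over $\lambda$ and then the infimum over $\alpha$ gives the theorem.

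I expect the main obstacle to be spotting the decomposition $Tx-\alpha y=(T-\alpha qI)x+\alpha(qx-y)$. It is what explains the otherwise odd $|\alpha|\sqrt{1-q^2}$ correction terms, and it is what lets a single $\alpha$ serve both the real-part and imaginary-part estimates before optimizing. The only other delicate points are the passage from $\||T|^2\|_{ber}$ to $\sup_\lambda\|T\hat{k}_{\lambda}\|^2$ and the existence of a partner kernel $\hat{k}_{\mu}$ for each $\lambda$, which I take as given from the setting.
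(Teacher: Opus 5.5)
Your proof is correct and follows the same route as the paper. The argument has the same four steps:
\begin{itemize}
\item a pointwise bound on $\|T\hat{k}_{\lambda}\|^2$ with $u=T\hat{k}_{\lambda}$ and $v=\hat{k}_{\mu}$;
\item the sector estimate $|\Re z|+|\Im z|\le(1+\sin\theta)\,\textbf{ber}_q(T)$;
\item the splitting $\hat{k}_{\mu}=q\hat{k}_{\lambda}+\sqrt{1-q^2}\,v$, which is your $qx-y$ term, to get the $|\alpha|\sqrt{1-q^2}$ corrections;
\item the supremum over $\lambda$, then the infimum over $\alpha$.
\end{itemize}
The differences are minor. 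You derive the cited inequality \eqref{last30}, for unit $v$, directly by expanding the two squared norms and completing the square, rather than quoting it. You also state explicitly two facts the paper uses without comment: that $\||T|^2\|_{ber}=\sup_{\lambda}\|T\hat{k}_{\lambda}\|^2$, and that every $\lambda$ has a partner $\hat{k}_{\mu}$ with $\langle \hat{k}_{\lambda},\hat{k}_{\mu}\rangle=q$.
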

\begin{proof} To establish the result, we first recall the following inequality from \cite{potpourri}:
   \begin{eqnarray}
      0\le \|u\|^2\|v\|^2\le\frac{1}{4}\Big(\Im\langle u,v \rangle+\Re\langle u,v \rangle\Big)^2+  \frac{\|v\|^2}{2}\Big(\|u-\alpha v\|^2+\|u-i\alpha v\|^2\Big),\label{last30}
    \end{eqnarray} for all $u,v\in \mathscr{H}$ and  $\alpha\in \mathbb{R}$. 

     Let $\hat{k}_{\lambda}$ and  $\hat{k}_{\mu}$ be the normalized reproducing kernels of $\mathscr{H}$ such that  $\langle \hat{k}_{\lambda},\hat{k}_{\mu}\rangle=q$ and $\alpha\in \mathbb{R}$. Putting $u=T\hat{k}_{\lambda}$ and $v=\hat{k}_{\mu}$ in \eqref{last30}, we obtain
      \begin{align}
\|T\hat{k}_{\lambda}\|^2&\le \frac{1}{4}\Big(\Im\langle T\hat{k}_{\lambda},\hat{k}_{\mu} \rangle+\Re\langle T\hat{k}_{\lambda},\hat{k}_{\mu} \rangle\Big)^2+\frac{1}{2}\Big(\|T\hat{k}_{\lambda}-\alpha\hat{k}_{\mu}\|^2+\|T\hat{k}_{\lambda}-i\alpha\hat{k}_{\mu}\|^2\Big)\nonumber\\
&\le \frac{1}{4}\Big(|\Im\langle T\hat{k}_{\lambda},\hat{k}_{\mu} \rangle|+|\Re\langle T\hat{k}_{\lambda},\hat{k}_{\mu} \rangle|\Big)^2+\frac{1}{2}\Big(\|T\hat{k}_{\lambda}-\alpha\hat{k}_{\mu}\|^2+\|T\hat{k}_{\lambda}-i\alpha\hat{k}_{\mu}\|^2\Big)\nonumber\\
&\le \frac{1}{4}(1+\sin\theta)^2\textbf{ber}_q^2(T) +\frac{1}{2}\Big(\|T\hat{k}_{\lambda}-\alpha\hat{k}_{\mu}\|^2+\|T\hat{k}_{\lambda}-i\alpha\hat{k}_{\mu}\|^2\Big)\,\,(\mbox{by \eqref{yytt}}).\label{kkii}
    \end{align}
 Now, by taking $\hat{k}_{\mu}=q\hat{k}_{\lambda}+\sqrt{1-q^2}v$, for some unit vector $v$   orthogonal to  $\hat{k}_{\lambda}$, we obtain
\begin{align*}
    \|T\hat{k}_{\lambda}-\alpha\hat{k}_{\mu}\|^2&=\Big(\|T\hat{k}_{\lambda}-\alpha q\hat{k}_{\lambda}-\alpha\sqrt{1-q^2}v\|\Big)^2\\
    &\le\Big(\|T\hat{k}_{\lambda}-\alpha q\hat{k}_{\lambda}\|+|\alpha|\sqrt{1-q^2}\|v\|\Big)^2\\
      &\le\Big(\sqrt{\||T-\alpha qI|^2\|_{ber}}+|\alpha|\sqrt{1-q^2}\Big)^2.
\end{align*}
 A similar argument gives
\begin{align*}
    \|T\hat{k}_{\lambda}-i\alpha\hat{k}_{\mu}\|^2\le\Big(\sqrt{\||T-i\alpha qI|^2\|_{ber}}+|\alpha|\sqrt{1-q^2}\Big)^2.
\end{align*}
Substituting these estimates  into the  inequality \eqref{kkii}, we obtain
 \begin{align*}
\|T\hat{k}_{\lambda}\|^2&\le \frac{1}{4}(1+\sin\theta)^2\textbf{ber}_q^2(T) +\frac{1}{2}\Big\{\Big(\sqrt{\||T-\alpha qI|^2\|_{ber}}+|\alpha|\sqrt{1-q^2}\Big)^2\\
&+\Big(\sqrt{\||T-i\alpha qI|^2\|_{ber}}+|\alpha|\sqrt{1-q^2}\Big)^2\Big\}.\end{align*}

   Taking supremum over $\lambda\in\Omega$, we get
    \begin{align*}
  \||T|^2\|_{ber}&\le \frac{1}{4}(1+\sin\theta)^2\textbf{ber}_q^2(T) +\frac{1}{2}\Big\{\Big(\sqrt{\||T-\alpha qI|^2\|_{ber}}+|\alpha|\sqrt{1-q^2}\Big)^2\\
&+\Big(\sqrt{\||T-i\alpha qI|^2\|_{ber}}+|\alpha|\sqrt{1-q^2}\Big)^2\Big\}.\end{align*}    
  
   Since this holds for every real
   $\alpha\in\mathbb{R}$, taking the infimum over $\alpha\in \mathbb{R}$ completes the proof.
\end{proof}

In the following result, we establish an upper bound for the $q$-Berezin number of a certain positive operator acting on a reproducing kernel Hilbert space $\mathscr{H}$. 
\begin{theorem}\label{ts}
    Let $q\in(0,1)$ and $S,T\in \mathscr{B(H)}$ be such that $T^*S\in \Pi_{\theta}^{\textbf{Ber}_q}$. Then 
    \begin{align*}
      & \textbf{ ber}_q(S^*S+T^*T)\\&\le\|S-iT\|^2+2\bigg(\Big( \sin\theta+\frac{4\sqrt{1-q^2}}{q}\Big)\textbf{ber}_q( T^*S)+\sqrt{1-q^2}\| \Im(T^*S)\|\bigg).
    \end{align*}
\end{theorem}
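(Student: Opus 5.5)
Our plan is to start from the identity $(S-iT)^*(S-iT)=S^*S+T^*T+i(T^*S-S^*T)$, i.e.
\[
S^*S+T^*T=|S-iT|^2+2\,\Im(T^*S),
\]
since $i(T^*S-S^*T)=-2\Im(T^*S)$ with $\Im(X)=\frac{1}{2i}(X-X^*)$. Fix normalized kernels $\hat{k}_{\lambda},\hat{k}_{\mu}$ with $\langle \hat{k}_{\lambda},\hat{k}_{\mu}\rangle=q$. Then
\[
\big|\langle (S^*S+T^*T)\hat{k}_{\lambda},\hat{k}_{\mu}\rangle\big|\le \big|\langle |S-iT|^2\hat{k}_{\lambda},\hat{k}_{\mu}\rangle\big|+2\big|\langle \Im(T^*S)\hat{k}_{\lambda},\hat{k}_{\mu}\rangle\big|.
\]
The first term is at most $\||S-iT|^2\|=\|S-iT\|^2$ by Cauchy--Schwarz. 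So it suffices to prove
\[
\big|\langle \Im(T^*S)\hat{k}_{\lambda},\hat{k}_{\mu}\rangle\big|\le \Big(\sin\theta+\frac{4\sqrt{1-q^2}}{q}\Big)\textbf{ber}_q(T^*S)+\sqrt{1-q^2}\,\|\Im(T^*S)\|.
\]

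For this, we write $\hat{k}_{\mu}=q\hat{k}_{\lambda}+\sqrt{1-q^2}\,v$ with $v$ a unit vector orthogonal to $\hat{k}_{\lambda}$, as in the proof of Lemma \ref{ber(im)}. This gives
\[
\big|\langle \Im(T^*S)\hat{k}_{\lambda},\hat{k}_{\mu}\rangle\big|\le q\,\big|\langle \Im(T^*S)\hat{k}_{\lambda},\hat{k}_{\lambda}\rangle\big|+\sqrt{1-q^2}\,\|\Im(T^*S)\|.
\]
The factor $q$ multiplying the Berezin symbol can be discarded upward ($q\le 1$), so the first term is at most $\textbf{ber}(\Im(T^*S))$.

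Next we apply Lemma \ref{ber(im)} to $T^*S\in\Pi_\theta^{\textbf{Ber}_q}$:
\[
\textbf{ber}(\Im(T^*S))\le \Big(\frac{\sin\theta}{q}+\frac{4\sqrt{1-q^2}}{q^2}\Big)\textbf{ber}_q(T^*S).
\]
Multiplying by the retained $q$ yields exactly $\big(\sin\theta+\frac{4\sqrt{1-q^2}}{q}\big)\textbf{ber}_q(T^*S)$. Hence we keep the factor $q$ rather than discarding it, and write the bound directly as $q\,\textbf{ber}(\Im(T^*S))$. More precisely, the proof of Lemma \ref{ber(im)} (inequality \eqref{ppaa}) shows that
\[
q\,|\langle\Im(X)\hat{k}_{\lambda},\hat{k}_{\lambda}\rangle|\le \Big(\sin\theta+\frac{4\sqrt{1-q^2}}{q}\Big)\textbf{ber}_q(X)
\]
for every $\lambda$.

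Combining the two estimates and taking the supremum over admissible pairs $(\lambda,\mu)$ gives the theorem. The main point to check is the bookkeeping of the factor $q$: we must use the pointwise form \eqref{ppaa} of the lemma so that the $1/q$ in Lemma \ref{ber(im)} cancels with the $q$ from the decomposition. We also need to handle the case $\theta=0$, which the lemma already covers. No real obstacle is expected beyond verifying the sign and normalization in the identity $|S-iT|^2=S^*S+T^*T-2\Im(T^*S)$.
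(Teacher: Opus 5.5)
Your proposal is correct and follows the paper's proof essentially step for step. Both use the identity $S^*S+T^*T=(S-iT)^*(S-iT)+2\Im(T^*S)$ with the triangle inequality, the splitting $\hat{k}_{\mu}=q\hat{k}_{\lambda}+\sqrt{1-q^2}\,v$, the bound $\|S-iT\|^2$ for the first term, and the pointwise inequality \eqref{ppaa} applied to $T^*S$ to absorb the factor $q$. Your detour about discarding $q$ does no harm, and the pointwise form you settle on only needs $\lambda$ to admit a $\mu$ with $\langle\hat{k}_{\lambda},\hat{k}_{\mu}\rangle=q$, which your fixed pair already supplies.
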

\begin{proof}    Let $\hat{k}_{\lambda}$ and  $\hat{k}_{\mu}$ be the normalized reproducing kernels of $\mathscr{H}$ such that  $\langle \hat{k}_{\lambda},\hat{k}_{\mu}\rangle=q$. Then we can decompose $\hat{k}_{\mu}$ as $\hat{k}_{\mu}=q\hat{k}_{\lambda}+\sqrt{1-q^2}v$, where  $v$ is a unit vector    orthogonal to  $\hat{k}_{\lambda}$. Then we have
    \begin{align}
       \big \langle (S-iT)^*(S-iT)\hat{k}_{\lambda},\hat{k}_{\mu}\big\rangle=     \big \langle (S^*S+T^*T)\hat{k}_{\lambda},\hat{k}_{\mu}\big\rangle-2\big \langle \Im(T^*S)\hat{k}_{\lambda},\hat{k}_{\mu}\big\rangle.\label{uuf}
    \end{align}
    Consequently, we obtain\begin{align*}
        &\big|\big\langle( S^*S+T^*T)\hat{k}_{\lambda},\hat{k}_{\mu}\big\rangle\big|\\&=        \big|\big\langle( S^*S+T^*T)\hat{k}_{\lambda},\hat{k}_{\mu}\big\rangle-2\big \langle \Im(T^*S)\hat{k}_{\lambda},\hat{k}_{\mu}\big\rangle+2\big \langle \Im(T^*S)\hat{k}_{\lambda},\hat{k}_{\mu}\big\rangle\big|\\
        &\le  \big|\big\langle( S^*S+T^*T)\hat{k}_{\lambda},\hat{k}_{\mu}\big\rangle-2\big \langle \Im(T^*S)\hat{k}_{\lambda},\hat{k}_{\mu}\big\rangle\big|+2\big|\big \langle \Im(T^*S)\hat{k}_{\lambda},\hat{k}_{\mu}\big\rangle\big|\\
         &\le   \big|\big \langle (S-iT)^*(S-iT)\hat{k}_{\lambda},\hat{k}_{\mu}\big\rangle\big|+2\big|\big \langle \Im(T^*S)\hat{k}_{\lambda},q\hat{k}_{\lambda}+\sqrt{1-q^2}v\big\rangle\big|\,\,\,(\mbox{by (\ref{uuf})})\\
           &\le \textbf{ber}_q\big( (S-iT)^*(S-iT)\big)+2\Big(q\big|\big \langle \Im(T^*S)\hat{k}_{\lambda},\hat{k}_{\lambda}\big\rangle\big|+\sqrt{1-q^2}\big|\big\langle \Im(T^*S)\hat{k}_{\lambda},v\big\rangle\big|\Big)\\
            &\le\|S-iT\|^2+2\Big(q\big|\Im(\langle T^*S\hat{k}_{\lambda},\hat{k}_{\lambda}\rangle)\big|+\sqrt{1-q^2}\| \Im(T^*S)\hat{k}_{\lambda}\|\Big)\\
             &\le\|S-iT\|^2+2\bigg(\Big( \sin\theta+\frac{4\sqrt{1-q^2}}{q}\Big)\textbf{ber}_q( T^*S)+\sqrt{1-q^2}\| \Im(T^*S)\|\bigg)\,\,\,(\mbox{by \eqref{ppaa}}).
    \end{align*} Therefore, by taking the supremum over all $\lambda,\mu \in \Omega$, we arrive at the required inequality.
\end{proof}

\begin{remark} In \cite[Th. 3.5]{neq}, it was shown that for $\mathcal{X},\mathcal{Y}\in \mathscr{B(H)}$ then \begin{align*}
\textbf{ber}_q(\mathcal{X}^*\mathcal{X}+\mathcal{Y}^*\mathcal{Y})\le \|\mathcal{X}-\mathcal{Y}\|^2+2\textbf{ber}_q(\mathcal{Y}^*\mathcal{X}).
\end{align*}
Substituting $\mathcal{X}=S$ and $\mathcal{Y}=iT$ into the above inequality yields
\begin{align}
       \textbf{ ber}_q(S^*S+T^*T)\le\|S-iT\|^2+2 \textbf{ber}_q( T^*S).\label{tsw}
\end{align}
    Now, consider $\frac{6}{\sqrt{37}}<q<1$ and $0<\theta< \sin^{-1}\left(1-\frac{6\sqrt{1-q^2}}{q}\right)$. Under these assumptions, we deduce that
    \begin{align*}
       & \Big( \sin\theta+\frac{4\sqrt{1-q^2}}{q}\Big) \textbf{ber}_q( T^*S)+\sqrt{1-q^2}\| \Im(T^*S)\|\\
       &\le \Big( \sin\theta+\frac{4\sqrt{1-q^2}}{q}\Big) \textbf{ber}_q( T^*S)+\frac{2\sqrt{1-q^2}}{q} \textbf{ber}_q( \Im(T^*S))\\
      & \le\left(q\sin \theta +\frac{6\sqrt{1-q^2}}{q}\right)\textbf{ber}_q( T^*S)\\
       & < \textbf{ber}_q( T^*S). 
    \end{align*} This shows that the estimate obtained in Theorem \ref{ts} yields a sharper upper bound than the bound given in \eqref{tsw}.
\end{remark}

For an operator $T\in \mathscr{B(H)}$, the power inequality for the Berezin number, \begin{align*}\textbf{ber}(T^n)\le \textbf{ber}^n(T), \,\,\,\mbox{where}\,\, n\in \mathbb{N}\end{align*} does not hold in general; see \cite[Th. 3]{coburn}. However,  it was established in \cite{garayevlama} that some classes of operators belonging to a norm-closed subalgebra also satisfy this power inequality. Moreover, \cite{garayev} established that the power inequality is valid for Toeplitz operators on the Hardy–Hilbert and Bergman spaces under suitable assumptions. More recently, in \cite{saikat}, the authors proved that if $\phi$ is harmonic on  $\mathbb{D}$ and $T_{\phi}$ is a Toeplitz operator on the Bergman space $A_\alpha^2(\mathbb{D})$, then the following power inequalities for the Berezin numbers of the real and imaginary parts of $T_{\phi}$  holds:
    \begin{eqnarray*}
        \textbf{ber}\left(\Re^n(T_{\phi})\right)\le \textbf{ber}^n\left(\Re(T_{\phi})\right)\,\,\,\,\mbox{and}\,\,  \,\,  \textbf{ber}\left(\Im^n(T_{\phi})\right)\le \textbf{ber}^n\left(\Im(T_{\phi})\right),
    \end{eqnarray*}
    for every natural number $n$. 

  Here, we establish several inequalities for the $q$-Berezin number of $q$-Berezin sectorial operators whose real and imaginary parts satisfy the Berezin number power inequality. We denote the class of all such operators by $\Pi_{\theta}^{\textbf{Ber}_q,P}$, defined as
 \begin{eqnarray*}
\Pi_{\theta}^{\textbf{Ber}_q,P}
&=&\Big\{ T\in \Pi_{\theta}^{\textbf{Ber}_q} : \textbf{ber}\left(\Re^n(T)\right)\le \textbf{ber}^n\left (\Re(T)\right),\\
&&\,\,\,\,\,\,\,\,\,\,\textbf{ber}\left(\Im^n(T)\right)\le \textbf{ber}^n \left(\Im(T)\right)\,\, \forall n\in\mathbb{N}\Big\}.     
 \end{eqnarray*}
Observe that, for a harmonic function $\phi$  on $\mathbb{D}$, there exist a constant $M >0$  such that the operator 
$T_{\phi+M}\in \Pi_{\theta}^{\textbf{Ber}_q,P}.$

\begin{theorem}
      Let $q\in(0,1)$ and $X,Y, S,T\in \mathscr{B(H)}$ with $T\in\Pi_{\theta}^{\textbf{Ber}_q,P}$. Then
          \begin{align*}
          \textbf{ber}_q(TXS\pm SYT) \le& \,2q\max\{\|XS\|,\|SY\|\}\left(\frac{4}{q^2}+\left( \frac{\sin\theta}{q}+\frac{4\sqrt{1-q^2}}{q^2}\right)^2\right)^{\frac{1}{2}}\textbf{ber}_q(T)\\&+\sqrt{1-q^2}\big\|TXS\pm SYT\big\|.
     \end{align*}
\end{theorem}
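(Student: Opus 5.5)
Fix normalized kernels $\hat{k}_{\lambda},\hat{k}_{\mu}$ with $\langle \hat{k}_{\lambda},\hat{k}_{\mu}\rangle=q$. As in Lemma \ref{ber(im)}, write $\hat{k}_{\mu}=q\hat{k}_{\lambda}+\sqrt{1-q^2}v$ with $v$ a unit vector orthogonal to $\hat{k}_{\lambda}$. Then
\[
\big|\langle (TXS\pm SYT)\hat{k}_{\lambda},\hat{k}_{\mu}\rangle\big|\le q\,\big|\langle (TXS\pm SYT)\hat{k}_{\lambda},\hat{k}_{\lambda}\rangle\big|+\sqrt{1-q^2}\,\|TXS\pm SYT\|.
\]
The second term already matches the last term of the statement. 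It therefore suffices to show
\[
\big|\langle (TXS\pm SYT)\hat{k}_{\lambda},\hat{k}_{\lambda}\rangle\big|\le 2\max\{\|XS\|,\|SY\|\}\sqrt{\textbf{ber}^2(\Re T)+\textbf{ber}^2(\Im T)}
\]
and then bound the two Berezin numbers by $\textbf{ber}_q(T)$. Taking the supremum over $\lambda,\mu$ finishes the proof.

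For the diagonal term, I use Cauchy--Schwarz:
\[
|\langle TXS\hat{k}_{\lambda},\hat{k}_{\lambda}\rangle|\le\|XS\|\,\|T^*\hat{k}_{\lambda}\|,\qquad |\langle SYT\hat{k}_{\lambda},\hat{k}_{\lambda}\rangle|\le\|SY\|\,\|T\hat{k}_{\lambda}\|.
\]
So the diagonal term is at most $\max\{\|XS\|,\|SY\|\}\big(\|T\hat{k}_{\lambda}\|+\|T^*\hat{k}_{\lambda}\|\big)$. Next I use
\[
\|T\hat{k}_{\lambda}\|^2+\|T^*\hat{k}_{\lambda}\|^2=\langle(T^*T+TT^*)\hat{k}_{\lambda},\hat{k}_{\lambda}\rangle=2\langle(\Re^2T+\Im^2T)\hat{k}_{\lambda},\hat{k}_{\lambda}\rangle .
\]
Combined with $a+b\le\sqrt{2(a^2+b^2)}$, this gives $\|T\hat{k}_{\lambda}\|+\|T^*\hat{k}_{\lambda}\|\le 2\sqrt{\widetilde{\Re^2T}(\lambda)+\widetilde{\Im^2T}(\lambda)}$. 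Now the hypothesis $T\in\Pi_{\theta}^{\textbf{Ber}_q,P}$ with $n=2$ gives $\widetilde{\Re^2T}(\lambda)\le\textbf{ber}(\Re^2T)\le\textbf{ber}^2(\Re T)$, and likewise for $\Im T$. This is the only place the power-inequality assumption is used, which is why the square root of a sum of squares appears.

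It remains to bound $\textbf{ber}(\Re T)$ and $\textbf{ber}(\Im T)$ by $\textbf{ber}_q(T)$. Lemma \ref{ber(im)} handles the imaginary part: $\textbf{ber}(\Im T)\le\big(\frac{\sin\theta}{q}+\frac{4\sqrt{1-q^2}}{q^2}\big)\textbf{ber}_q(T)$. For the real part I aim for $\textbf{ber}(\Re T)\le\frac{2}{q}\textbf{ber}_q(T)$, so that $\textbf{ber}^2(\Re T)$ produces the $\frac{4}{q^2}$ term. The route is $|\widetilde{\Re T}(\lambda)|\le\textbf{ber}(T)\le w(T)$ together with the known comparison $w(T)\le\frac{2}{q}w_q(T)$. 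This step is the main obstacle, since what is needed is $\textbf{ber}(\Re T)\le\frac{2}{q}\textbf{ber}_q(T)$ in terms of the $q$-Berezin number rather than the $q$-numerical radius. I would first try to use the same decomposition trick as in the Lemma, applied to $\Re\langle T\hat{k}_{\lambda},\hat{k}_{\mu}\rangle$. If needed, I would instead fall back on the relation between $\textbf{ber}$ and $\textbf{ber}_q$ from \cite{filomatq,neq}, where the constant $\frac{2}{q}$ (or the bound $\frac{q}{4}\|T\|\le\textbf{ber}_q(T)$ already used in the Lemma) is available.

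Inserting both bounds gives $\sqrt{\textbf{ber}^2(\Re T)+\textbf{ber}^2(\Im T)}\le\big(\frac{4}{q^2}+(\frac{\sin\theta}{q}+\frac{4\sqrt{1-q^2}}{q^2})^2\big)^{1/2}\textbf{ber}_q(T)$. Multiplying by $2q\max\{\|XS\|,\|SY\|\}$ and adding the $\sqrt{1-q^2}\|TXS\pm SYT\|$ term yields the stated inequality.
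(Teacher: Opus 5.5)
The gap is the one step you flag yourself: $\textbf{ber}(\Re T)\le\frac{2}{q}\textbf{ber}_q(T)$. Everything else in your argument is the paper's proof. That includes the decomposition $\hat{k}_{\mu}=q\hat{k}_{\lambda}+\sqrt{1-q^2}v$, Cauchy--Schwarz on the diagonal term, $a+b\le\sqrt{2(a^2+b^2)}$, $T^*T+TT^*=2(\Re^2T+\Im^2T)$, the power inequality with $n=2$, and Lemma \ref{ber(im)} for $\Im T$. The paper only normalizes $XS,SY$ by $\max\{\|XS\|,\|SY\|\}$ first.

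None of your candidate routes gives the constant $\frac{2}{q}$:
\begin{enumerate}
\item[(i)] Going through $w(T)$ gives a bound by $w_q(T)$. This cannot be converted, because $\textbf{ber}_q(T)\le w_q(T)$ points the wrong way.
\item[(ii)] Applying the decomposition to $\Re\langle T\hat{k}_{\lambda},\hat{k}_{\mu}\rangle$ and bounding the off-diagonal part by $\|T\|\le\frac{4}{q}\textbf{ber}_q(T)$ gives the constant $\frac1q+\frac{4\sqrt{1-q^2}}{q^2}$. This is $\le\frac2q$ only when $q^2\ge\frac{16}{17}$.
\item[(iii)] Using $\textbf{ber}(\Re T)\le\|T\|\le\frac4q\textbf{ber}_q(T)$ produces $\frac{16}{q^2}$ in place of $\frac{4}{q^2}$.
\end{enumerate}
The paper instead works with the self-adjoint operator $\Re T$ itself: $\textbf{ber}(\Re T)\le\|\Re T\|\le\frac2q\textbf{ber}_q(\Re T)\le\frac2q\textbf{ber}_q(T)$. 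The last inequality holds because $\langle \Re T\,\hat{k}_{\lambda},\hat{k}_{\mu}\rangle=\frac12\langle T\hat{k}_{\lambda},\hat{k}_{\mu}\rangle+\frac12\overline{\langle T\hat{k}_{\mu},\hat{k}_{\lambda}\rangle}$, and $(\mu,\lambda)$ is admissible whenever $(\lambda,\mu)$ is, since $q$ is real.

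Be aware that the paper justifies the middle inequality only by normality of $\Re T$, and that is not sufficient. On $H^2(\mathbb{D})$, let $P$ be the projection onto $\mathrm{span}\{z^n\}$. Then $\|P\|=1$, but $|\langle P\hat{k}_{\lambda},\hat{k}_{\mu}\rangle|=\sqrt{1-|\lambda|^2}\,|\lambda|^n\sqrt{1-|\mu|^2}\,|\mu|^n\le\frac{1}{n+1}$. So $\|P\|\le\frac{2}{q}\textbf{ber}_q(P)$ fails once $n+1>\frac{2}{q}$.

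The step is still true under your hypothesis $T\in\Pi^{\textbf{Ber}_q,P}_\theta$, with a better constant, and you can prove it directly. Let $A=\Re T$. Since $v\perp\hat{k}_{\lambda}$ and $\widetilde{A}(\lambda)\in\mathbb{R}$,
\[
|\langle A\hat{k}_{\lambda},v\rangle|=|\langle (A-\widetilde{A}(\lambda)I)\hat{k}_{\lambda},v\rangle|\le\big(\|A\hat{k}_{\lambda}\|^2-\widetilde{A}(\lambda)^2\big)^{1/2}\le\big(\textbf{ber}^2(A)-\widetilde{A}(\lambda)^2\big)^{1/2}.
\]
The last step is the power inequality for $n=2$, since $\|A\hat{k}_{\lambda}\|^2=\widetilde{A^2}(\lambda)\le\textbf{ber}(A^2)$. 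Hence $q|\widetilde{A}(\lambda)|\le\textbf{ber}_q(A)+\sqrt{1-q^2}\big(\textbf{ber}^2(A)-\widetilde{A}(\lambda)^2\big)^{1/2}$.

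Now choose $\lambda_j$ with $|\widetilde{A}(\lambda_j)|\to\textbf{ber}(A)$, each having a partner $\mu_j$, as the paper tacitly assumes in Lemma \ref{ber(im)}. This gives $\textbf{ber}(\Re T)\le\frac1q\textbf{ber}_q(\Re T)\le\frac1q\textbf{ber}_q(T)$, which is more than enough for the stated bound.
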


\begin{proof}  If $XS=SY=0$ then the result holds trivially. Hence, assume that $\max\{\|XS\|,\|SY\|\}\neq 0$. Let $\hat{k}_{\lambda}$ and  $\hat{k}_{\mu}$ be the normalized reproducing kernels of $\mathscr{H}$ such that  $\langle \hat{k}_{\lambda},\hat{k}_{\mu}\rangle=q$. Then we can decompose $\hat{k}_{\mu}$ as $\hat{k}_{\mu}=q\hat{k}_{\lambda}+\sqrt{1-q^2}v$, where  $v$ is a unit vector    orthogonal to  $\hat{k}_{\lambda}$.  Define  $P=\frac{XS}{\max\{\|XS\|,\|SY\|\}}$ and $Q=\frac{SY}{\max\{\|XS\|,\|SY\|\}}$. Clearly, $\|P\|\le 1$ and $\|Q\|\le 1$. Then we have
    \begin{align*}
        |\langle (TP\pm QT)\hat{k}_{\lambda},\hat{k}_{\mu}\rangle|&=|\langle (TP\pm QT)\hat{k}_{\lambda},q\hat{k}_{\lambda}+\sqrt{1-q^2}v\rangle|\\
        &\le q|\langle (TP\pm QT)\hat{k}_{\lambda},\hat{k}_{\lambda}\rangle|+\sqrt{1-q^2}|\langle (TP\pm QT)\hat{k}_{\lambda},v\rangle|\\
         &\le q\Big(|\langle P\hat{k}_{\lambda},T^*\hat{k}_{\lambda}\rangle|+|\langle T\hat{k}_{\lambda},Q^*\hat{k}_{\lambda}\rangle|\Big)+\sqrt{1-q^2}\|TP\pm QT\|\\
          &\le q\Big(\|T^*\hat{k}_{\lambda}\|+\| T\hat{k}_{\lambda}\|\Big)+\sqrt{1-q^2}\|TP\pm QT\|\\
           &\le q\sqrt{2}\Big(\|T^*\hat{k}_{\lambda}\|^2+\| T\hat{k}_{\lambda}\|^2\Big)^{\frac{1}{2}}+\sqrt{1-q^2}\|TP\pm QT\|\\
              &\le q\sqrt{2}\sqrt{\textbf{ber}(TT^*+ T^*T)}+\sqrt{1-q^2}\big\|TP\pm QT\big\|\\
              &= 2q\sqrt{\textbf{ber}(\Re^2(T)+\Im^2(T)\big)}+\sqrt{1-q^2}\big\|TP\pm QT\big\|\\
               &\le 2q\sqrt{\textbf{ber}^2(\Re(T))+\textbf{ber}^2(\Im(T))}+\sqrt{1-q^2}\big\|TP\pm QT\big\|.
    \end{align*}
    Since $\Re(T)$ is a normal operator, we have $\textbf{ber}(\Re(T))\le\|\Re(T)\|\le \frac{2}{q}\textbf{ber}_q(\Re(T))\le\frac{2}{q}\textbf{ber}_q(T)$.
   Combining this estimate with Lemma \ref{ber(im)}, we obtain
     \begin{align*}
          & |\langle (TP\pm QT)\hat{k}_{\lambda},\hat{k}_{\mu}\rangle|\\& \le 2q\left(\frac{4}{q^2}+\left( \frac{\sin\theta}{q}+\frac{4\sqrt{1-q^2}}{q^2}\right)^2\right)^{\frac{1}{2}}\textbf{ber}_q(T)+\sqrt{1-q^2}\big\|TP\pm QT\big\|.
     \end{align*}
    Taking the supremum over all $\lambda,\mu \in \Omega$, we deduce that
    \begin{align*}
          \textbf{ber}_q(TP\pm QT) \le 2q\left(\frac{4}{q^2}+\left( \frac{\sin\theta}{q}+\frac{4\sqrt{1-q^2}}{q^2}\right)^2\right)^{\frac{1}{2}}\textbf{ber}_q(T)+\sqrt{1-q^2}\big\|TP\pm QT\big\|.
     \end{align*}
  Hence, it follows that
      \begin{align*}
          \textbf{ber}_q(TXS\pm SYT) \le& \,2q\max\{\|XS\|,\|SY\|\}\left(\frac{4}{q^2}+\left( \frac{\sin\theta}{q}+\frac{4\sqrt{1-q^2}}{q^2}\right)^2\right)^{\frac{1}{2}}\textbf{ber}_q(T)\\&+\sqrt{1-q^2}\big\|TXS\pm SYT\big\|.
     \end{align*} This completes the proof. 
\end{proof} The following corollary is obtained from the theorem by choosing $X=Y=I$.
\begin{cor}\label{kkk}
       Let $q\in(0,1)$ and $S ,T\in \mathscr{B(H)}$with $T\in\Pi_{\theta}^{\textbf{Ber}_q,P}$. Then
          \begin{align*}
          \textbf{ber}_q(TS\pm ST) \le& \,2q\|S\|\left(\frac{4}{q^2}+\left( \frac{\sin\theta}{q}+\frac{4\sqrt{1-q^2}}{q^2}\right)^2\right)^{\frac{1}{2}}\textbf{ber}_q(T)\\&+\sqrt{1-q^2}\big\|TS\pm ST\big\|.
     \end{align*}
\end{cor}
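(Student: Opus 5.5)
This is the special case $X=Y=I$ of the preceding theorem, so I would simply specialize that result rather than re-running the kernel estimates. With $X=Y=I$ we have $TXS\pm SYT=TS\pm ST$, and $\max\{\|XS\|,\|SY\|\}=\max\{\|S\|,\|S\|\}=\|S\|$. The hypotheses match exactly: $q\in(0,1)$, $S,T\in\mathscr{B(H)}$ and $T\in\Pi_{\theta}^{\textbf{Ber}_q,P}$. The right-hand side of the theorem therefore becomes
\[
2q\|S\|\left(\frac{4}{q^2}+\left(\frac{\sin\theta}{q}+\frac{4\sqrt{1-q^2}}{q^2}\right)^2\right)^{\frac{1}{2}}\textbf{ber}_q(T)+\sqrt{1-q^2}\,\|TS\pm ST\|,
\]
which is precisely the asserted bound.

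The only case to check separately is $S=0$. The proof of the theorem handles the case $XS=SY=0$ by saying it holds trivially, so I would confirm this here directly. If $S=0$, then $TS\pm ST=0$, so the left-hand side $\textbf{ber}_q(0)=0$, while the right-hand side is nonnegative. The inequality holds.

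There is essentially no obstacle here. The only point needing care is that the normalization $P=XS/\max\{\|XS\|,\|SY\|\}$ used in the theorem's proof becomes $P=Q=S/\|S\|$ with $\|P\|,\|Q\|\le 1$. This is consistent, so the theorem applies verbatim and the corollary follows.
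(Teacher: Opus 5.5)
Your proposal is correct and is exactly the paper's argument. The corollary follows by setting $X=Y=I$ in the preceding theorem, which gives $\max\{\|XS\|,\|SY\|\}=\|S\|$ and $TXS\pm SYT=TS\pm ST$; your separate check of $S=0$ is harmless but is already covered by the theorem's trivial case $XS=SY=0$.
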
 Using the corollary \ref{kkk}, we obtain the next result.
\begin{cor}
       Let $q\in(0,1)$ and $S,T\in\Pi_{\theta}^{\textbf{Ber}_q,P}$. Then
          \begin{align*}
          \textbf{ber}_q(TS\pm ST) \le & \min\{\beta_1,\beta_2\},
     \end{align*}
     where \begin{align*}
         \beta_1&=          \,2q\|S\|\left(\frac{4}{q^2}+\left( \frac{\sin\theta}{q}+\frac{4\sqrt{1-q^2}}{q^2}\right)^2\right)^{\frac{1}{2}}\textbf{ber}_q(T)+\sqrt{1-q^2}\big\|TS\pm ST\big\|,\\
         \beta_2&=          \,2q\|T\|\left(\frac{4}{q^2}+\left( \frac{\sin\theta}{q}+\frac{4\sqrt{1-q^2}}{q^2}\right)^2\right)^{\frac{1}{2}}\textbf{ber}_q(S)+\sqrt{1-q^2}\big\|TS\pm ST\big\|.
     \end{align*}
\end{cor}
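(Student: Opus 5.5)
The plan is to apply Corollary \ref{kkk} twice, once with the roles of $S$ and $T$ as stated there and once with them interchanged, and then take the minimum of the two resulting bounds.

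First, since $T\in\Pi_{\theta}^{\textbf{Ber}_q,P}$ and $S\in\mathscr{B(H)}$, Corollary \ref{kkk} applies directly. It gives $\textbf{ber}_q(TS\pm ST)\le\beta_1$.

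Second, since $S\in\Pi_{\theta}^{\textbf{Ber}_q,P}$ and $T\in\mathscr{B(H)}$, we apply Corollary \ref{kkk} with $S$ and $T$ interchanged. This yields
\[
\textbf{ber}_q(ST\pm TS)\le 2q\|T\|\left(\frac{4}{q^2}+\left( \frac{\sin\theta}{q}+\frac{4\sqrt{1-q^2}}{q^2}\right)^2\right)^{\frac{1}{2}}\textbf{ber}_q(S)+\sqrt{1-q^2}\,\|ST\pm TS\|.
\]
The only point requiring care is matching this with $\beta_2$, which is written in terms of $TS\pm ST$.

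For the sign $+$, we have $ST+TS=TS+ST$, so the bound is exactly $\beta_2$. For the sign $-$, we have $ST-TS=-(TS-ST)$. Both $\textbf{ber}_q$ and the operator norm are invariant under multiplication by $-1$, because $|\langle -A\hat{k}_{\lambda},\hat{k}_{\mu}\rangle|=|\langle A\hat{k}_{\lambda},\hat{k}_{\mu}\rangle|$ and $\|-A\|=\|A\|$. Hence $\textbf{ber}_q(TS-ST)=\textbf{ber}_q(ST-TS)\le\beta_2$, with $\|ST-TS\|=\|TS-ST\|$ inside $\beta_2$.

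Combining the two steps, $\textbf{ber}_q(TS\pm ST)\le\beta_1$ and $\textbf{ber}_q(TS\pm ST)\le\beta_2$, so $\textbf{ber}_q(TS\pm ST)\le\min\{\beta_1,\beta_2\}$. There is no real obstacle here; the only thing to watch is this sign bookkeeping.
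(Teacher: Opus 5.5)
Your proof is correct and is the paper's argument: apply Corollary~\ref{kkk} as stated to get $\beta_1$, and apply it again with $S$ and $T$ interchanged to get $\beta_2$. Your check that $\textbf{ber}_q$ and $\|\cdot\|$ are unchanged under $A\mapsto -A$, which lets you replace $ST-TS$ by $TS-ST$, fills in the one detail the paper leaves implicit.
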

\begin{proof}
    The proof is an immediate consequence of Corollary \ref{kkk} after interchanging $S$ and $T$.
\end{proof}

The following result is the classical Hermite-Hadamard inequality for convex functions (see \cite{hada}). Let
 $\varphi:I\to \mathbb{R}$ be a convex function, where
 $I$ is a convex subset  of $\mathbb{R}$. Then, for any  $a,b\in I$ with $a<b$, we have 
\begin{align*} 
    \varphi\left(\frac{a+b}{2}\right)\le \int_{0}^1 \varphi(ta+(1-t)b)dt\le \frac{\varphi(a)+\varphi(b)}{2}.
\end{align*}
Using this inequality, we derive the following lower bound involving the $q$-Berezin number.

\begin{theorem}\label{xxpp}
     Let $q\in(0,1)$,    $T\in \Pi^{\textbf{Ber}_q,P}_\theta$ and $\varphi$ be an increasing, continuous and convex function.  Then
     \begin{align*}
         \varphi\left(\frac{\|T^*T+TT^*\|_{{ber}}}{2}\right) &\le\int_{0}^1 \varphi\left( \frac{8t}{q^2}\textbf{ber}_q^2(T)+2(1-t)\left( \frac{\sin\theta}{q}+\frac{4\sqrt{1-q^2}}{q^2}\right)^2\textbf{ber}_q^2(T)\right)dt\\
              &\le\frac{1}{2}\left(  \varphi\left(\frac{8}{q^2}\textbf{ber}_q^2(T)\right)+ \varphi\left(2\left( \frac{\sin\theta}{q}+\frac{4\sqrt{1-q^2}}{q^2}\right)^2\textbf{ber}_q^2(T)\right)\right).
     \end{align*}
\end{theorem}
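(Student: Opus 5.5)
The plan is to reduce everything to the Hermite--Hadamard inequality. Set
\[
a=\frac{8}{q^2}\textbf{ber}_q^2(T),\qquad b=2\left( \frac{\sin\theta}{q}+\frac{4\sqrt{1-q^2}}{q^2}\right)^2\textbf{ber}_q^2(T).
\]
The second inequality in the statement is then exactly the right half of Hermite--Hadamard applied to $\varphi$ on $[\min\{a,b\},\max\{a,b\}]$. The integral $\int_0^1\varphi(ta+(1-t)b)\,dt$ is symmetric under $t\mapsto 1-t$, so the ordering $a<b$ is irrelevant. If $a=b$, both inequalities are trivial equalities.

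For the first inequality, the left half of Hermite--Hadamard gives $\varphi\big(\frac{a+b}{2}\big)\le\int_0^1\varphi(ta+(1-t)b)\,dt$. Since $\varphi$ is increasing, it then suffices to prove
\[
\frac{\|T^*T+TT^*\|_{ber}}{2}\le \frac{a+b}{2},\qquad\text{i.e.}\qquad \|T^*T+TT^*\|_{ber}\le \frac{8}{q^2}\textbf{ber}_q^2(T)+2\left( \frac{\sin\theta}{q}+\frac{4\sqrt{1-q^2}}{q^2}\right)^2\textbf{ber}_q^2(T).
\]

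To get this bound, I use $T^*T+TT^*=2(\Re^2(T)+\Im^2(T))$. This operator is positive, so by the result cited from \cite{pintuani} its Berezin norm equals its Berezin number:
\[
\|T^*T+TT^*\|_{ber}=2\,\textbf{ber}(\Re^2(T)+\Im^2(T)).
\]
Subadditivity of $\textbf{ber}$ gives $\textbf{ber}(\Re^2(T)+\Im^2(T))\le \textbf{ber}(\Re^2(T))+\textbf{ber}(\Im^2(T))$. The power inequality with $n=2$ holds because $T\in\Pi^{\textbf{Ber}_q,P}_\theta$, so this is at most $\textbf{ber}^2(\Re(T))+\textbf{ber}^2(\Im(T))$. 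The same chain appears in the proof of the preceding theorem. Then:
\begin{itemize}
\item As in that proof, $\textbf{ber}(\Re(T))\le\|\Re(T)\|\le\frac{2}{q}\textbf{ber}_q(\Re(T))\le\frac{2}{q}\textbf{ber}_q(T)$, so $\textbf{ber}^2(\Re(T))\le \frac{4}{q^2}\textbf{ber}_q^2(T)$.
\item Lemma \ref{ber(im)} gives $\textbf{ber}(\Im(T))\le\big(\frac{\sin\theta}{q}+\frac{4\sqrt{1-q^2}}{q^2}\big)\textbf{ber}_q(T)$.
\end{itemize}
Squaring, adding, and multiplying by $2$ yields exactly $a+b$.

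I expect no real obstacle. The one point needing care is the step $\|\cdot\|_{ber}=\textbf{ber}(\cdot)$ for the positive operator $T^*T+TT^*$, which I take from the earlier-cited fact. The bound $\textbf{ber}_q(\Re(T))\le\textbf{ber}_q(T)$ is used in the same way the paper uses it in the previous proof. Combining the displayed bound with monotonicity of $\varphi$ and both halves of Hermite--Hadamard finishes the argument.
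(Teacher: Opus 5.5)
Your argument follows the paper's proof step for step. The only change is harmless: you bound $\frac12\|T^*T+TT^*\|_{ber}$ by $\frac{a+b}{2}$ first and apply Hermite--Hadamard once at $a,b$, whereas the paper applies it at $2\textbf{ber}^2(\Re(T))$ and $2\textbf{ber}^2(\Im(T))$ and then uses monotonicity under the integral. The gap is in the estimate you import from the previous proof, $\|\Re(T)\|\le\frac{2}{q}\textbf{ber}_q(\Re(T))$. Normality of $\Re(T)$ gives $w(\Re(T))=\|\Re(T)\|$. But $\textbf{ber}_q\le w_q$, so no lower bound for $\textbf{ber}_q$ in terms of the norm follows, and in fact the estimate is false. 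On $H^2(\mathbb{D})$, the projection $Pf=\langle f,z\rangle z$ satisfies $|\langle P\hat{k}_{\lambda},\hat{k}_{\mu}\rangle|=|\lambda|\sqrt{1-|\lambda|^2}\,|\mu|\sqrt{1-|\mu|^2}\le\frac14$. Hence $\frac{2}{q}\textbf{ber}_q(P)\le\frac{1}{2q}<1=\|P\|$ whenever $q>\frac12$. Lemma \ref{ber(im)}, which you also use, rests on the equally unjustified claim $\frac{q}{4}\|T\|\le\textbf{ber}_q(T)$.

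In the stated generality (an arbitrary reproducing kernel Hilbert space) this cannot be repaired, because the inequality itself fails.

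\emph{The space.} Let $\mathscr{H}=\mathbb{C}^3$, viewed as a reproducing kernel Hilbert space on $\Omega=\{1,2,3\}$ via $f(j)=\langle f,u_j\rangle$. Here $u_1=e_1$, $u_2=qe_1+\sqrt{1-q^2}\,e_2$ and $u_3=e_3$, so $\hat{k}_j=u_j$. The only pairs with $\langle\hat{k}_{\lambda},\hat{k}_{\mu}\rangle=q$ are $(1,2)$ and $(2,1)$.

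\emph{The operator.} Let $P$ be the orthogonal projection onto $\mathbb{C}e_3$ and $T=I+cP$ with $c>0$. Then $\textbf{Ber}_q(T)=\{q\}$, so $T\in\Pi^{\textbf{Ber}_q}_{0}$ and $\textbf{ber}_q(T)=q$. Moreover $\Im(T)=0$ and $\textbf{ber}(T^n)=(1+c)^n=\textbf{ber}^n(T)$, so $T\in\Pi^{\textbf{Ber}_q,P}_{0}$.

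\emph{The failure.} Take $\varphi(t)=t$. The left-hand side is $\|T^2\|_{ber}\ge(1+c)^2$. Both right-hand sides equal $4+\frac{16(1-q^2)}{q^2}$, which is exceeded for large $c$. Similarly, $I+icP$ violates Lemma \ref{ber(im)}.

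So your proposal is faithful to the paper, but like the paper's proof it is not valid. A correct statement needs extra hypotheses on $\mathscr{H}$ that genuinely give $\textbf{ber}(A)\le C_q\,\textbf{ber}_q(A)$ for self-adjoint $A$, together with a valid replacement for Lemma \ref{ber(im)}.
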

\begin{proof}  Let $T\in \Pi^{\textbf{Ber}_q,P}_\theta$.  Then we have
    \begin{align*} \frac{\|T^*T+TT^*\|_{ber}}{2}&=\textbf{ber}(\Re^2(T)+\Im^2(T))\\
    &\le \textbf{ber}^2(\Re(T))+\textbf{ber}^2(\Im(T)).\end{align*}
  Since $\varphi$ is increasing, it follows that
    \begin{align*}
        \varphi\left( \frac{\|T^*T+TT^*\|_{ber}}{2}\right)\le \varphi\left(\frac{2\textbf{ber}^2(\Re(T))+2\textbf{ber}^2(\Im(T))}{2}\right).
    \end{align*}
   Applying the Hermite-Hadamard inequality, we obtain
    \begin{align*}
            \varphi\left( \frac{\|T^*T+TT^*\|_{ber}}{2}\right)\le\int_{0}^1 \varphi\left(2t\textbf{ber}^2(\Re(T))+2(1-t)\textbf{ber}^2(\Im(T))\right)dt
    \end{align*}
     Since $\Re(T)$ is a normal operator, it follows that $$\textbf{ber}(\Re(T))\le\|\Re(T)\|\le \frac{2}{q}\textbf{ber}_q(\Re(T))\le\frac{2}{q}\textbf{ber}_q(T).$$
     Combining this estimate with Lemma \ref{ber(im)}, we get
      \begin{align*}
            &\varphi\left( \frac{\|T^*T+TT^*\|_{ber}}{2}\right)\\&\le\int_{0}^1 \varphi\left( \frac{8t}{q^2}\textbf{ber}_q^2(T)+2(1-t)\left( \frac{\sin\theta}{q}+\frac{4\sqrt{1-q^2}}{q^2}\right)^2\textbf{ber}_q^2(T)\right)dt\\
            &\le\int_{0}^1t  \varphi\left(\frac{8}{q^2}\textbf{ber}_q^2(T)\right)+(1-t) \varphi\left(2\left( \frac{\sin\theta}{q}+\frac{4\sqrt{1-q^2}}{q^2}\right)^2\textbf{ber}_q^2(T)\right)dt\\
              &\le\frac{1}{2}\left(  \varphi\left(\frac{8}{q^2}\textbf{ber}_q^2(T)\right)+ \varphi\left(2\left( \frac{\sin\theta}{q}+\frac{4\sqrt{1-q^2}}{q^2}\right)^2\textbf{ber}_q^2(T)\right)\right).
    \end{align*}Hence, the proof is complete.
\end{proof}
The following corollary follows from Theorem \ref{xxpp} by setting $\varphi(t)=t$.
\begin{cor}\label{lp12}
     Let $q\in(0,1)$ and $T\in \Pi^{\textbf{Ber}_q,P}_\theta$.  Then  
     \begin{align*}
    \frac{q^4\|T^*T+TT^*\|_{{ber}}}{\left(  8q^2+ 2\left( q\sin\theta+4\sqrt{1-q^2}\right)^2\right)} &\le\textbf{ber}_q^2(T).
     \end{align*} 
\end{cor}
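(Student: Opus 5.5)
Corollary \ref{lp12} follows from Theorem \ref{xxpp} with $\varphi(t)=t$, which is increasing, continuous and convex on $[0,\infty)$. With this choice, the first inequality of Theorem \ref{xxpp} becomes
\begin{align*}
\frac{\|T^*T+TT^*\|_{ber}}{2}\le \int_0^1\left(\frac{8t}{q^2}\textbf{ber}_q^2(T)+2(1-t)\Big(\frac{\sin\theta}{q}+\frac{4\sqrt{1-q^2}}{q^2}\Big)^2\textbf{ber}_q^2(T)\right)dt .
\end{align*}
The integrand is affine in $t$. Using $\int_0^1 t\,dt=\int_0^1(1-t)\,dt=\tfrac12$, the right-hand side evaluates to
\[
\left(\frac{4}{q^2}+\Big(\frac{\sin\theta}{q}+\frac{4\sqrt{1-q^2}}{q^2}\Big)^2\right)\textbf{ber}_q^2(T).
\]
This coincides with the arithmetic-mean bound of the second inequality, since the Hermite--Hadamard inequality is an equality for affine $\varphi$.

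Next I clear denominators. Since $\frac{\sin\theta}{q}+\frac{4\sqrt{1-q^2}}{q^2}=\frac{q\sin\theta+4\sqrt{1-q^2}}{q^2}$, we get
\[
\frac{4}{q^2}+\Big(\frac{q\sin\theta+4\sqrt{1-q^2}}{q^2}\Big)^2=\frac{4q^2+(q\sin\theta+4\sqrt{1-q^2})^2}{q^4}.
\]
The inequality therefore reads
\[
\frac{\|T^*T+TT^*\|_{ber}}{2}\le \frac{4q^2+(q\sin\theta+4\sqrt{1-q^2})^2}{q^4}\,\textbf{ber}_q^2(T).
\]
Multiplying both sides by $2q^4$ and dividing by the positive quantity $8q^2+2(q\sin\theta+4\sqrt{1-q^2})^2$ gives exactly the stated bound.

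There is no real obstacle: the argument is a direct specialization of Theorem \ref{xxpp}. The only points to check are that the normalizing factors of $2$ match and that the denominator is strictly positive, which holds because $q>0$.
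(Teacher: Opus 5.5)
Your proposal is correct and is exactly the paper's argument: put $\varphi(t)=t$ in Theorem \ref{xxpp}, evaluate the affine integral to $\bigl(\frac{4}{q^2}+\frac{(q\sin\theta+4\sqrt{1-q^2})^2}{q^4}\bigr)\textbf{ber}_q^2(T)$, and clear the factor $q^4$. One caveat applies equally to the paper: all the content sits in Theorem \ref{xxpp} and Lemma \ref{ber(im)}, whose proofs invoke $\|\Re(T)\|\le\frac{2}{q}\textbf{ber}_q(\Re(T))$ and $\frac{q}{4}\|T\|\le\textbf{ber}_q(T)$, and neither holds for general (even self-adjoint) operators on a reproducing kernel Hilbert space (the projection $f\mapsto\langle f,z^n\rangle z^n$ on $H^2(\mathbb{D})$ has norm $1$ but $\textbf{ber}_q\le\frac{1}{n+1}$), so your derivation is only as solid as those upstream steps.
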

\begin{remark} By \cite[Th. 3.11]{filomatq}, we have the following inequality:
\begin{align}
    \frac{q^2}{16}\|T^*T+TT^*\|\le \textbf{ber}_q(T).\label{lp1} 
\end{align}
    Consider $T=\begin{pmatrix}
        5+i & 0\\ 
        0 &  4+\frac{i}{2}
    \end{pmatrix}$ acting on $\mathscr{H}=\mathbb{C}^2$ and $q=0.95$. It is clear that $T\in \Pi^{\textbf{Ber}_q,P}_{\frac{\pi}{12}}$.  In this case, inequality \eqref{lp1} yields the lower bound $2.9333\le \textbf{ber}_q^2(T)$ whereas Corollary  \ref{lp12} gives the sharper estimate $3.6233\le  \textbf{ber}_q^2(T).$ Therefore, the lower bound obtained from Corollary \ref{lp12} is tighter than the one provided by \eqref{lp1}.
\end{remark}

\section{$q$-Berezin range of operators on Bergman space}
\label{s3}

Let $\mathbb{D}=\{z\in \mathbb{C}:|z|<1\}$ denote the open unit disk, and let $\mathbb{T}_r=\{z\in \mathbb{C}:|z|=r\}$ represent the circle of radius $r$ centered at the origin. Throughout,  $dA$ denotes the area measure on $\mathbb{D}$, normalized to make the area of $\mathbb{D}$ equal to $1$.
 The Bergman space, denoted by $A^2(\mathbb{D})$, is defined by
\begin{align*}
    A^2(\mathbb{D})=\left\{f\in \mbox{Hol}(\mathbb{D)}:\int_{\mathbb{D}}|f(z)|^2dA<\infty\right\},
\end{align*} where $\mbox{Hol}(\mathbb{D)}$  denotes the collection of all holomorphic functions on $\mathbb{D}$. It is well known that $   A^2(\mathbb{D})$ forms a reproducing kernel Hilbert space. The reproducing kernel associated with $A^2(\mathbb{D})$ is given by
\begin{align*}
    k_{\lambda}(z)=\frac{1}{(1-\overline{\lambda}z)^{2}}\,\,\, \lambda,z \in \mathbb{D}
\end{align*} and  the corresponding normalized reproducing kernel is \begin{align*}
    \hat{k}_{\lambda}(z)=\frac{(1-|\lambda|^2)}{(1-\overline{\lambda}z)^{2}}\,\,\, \lambda,z \in \mathbb{D}.
\end{align*}  For more information on Bergman space, we refer the reader to \cite{zhu}.

Next, for a given $\lambda\in \mathbb{D}$, we  classify all values of $\mu$ satisfying the condition $\langle \hat{k}_{\lambda},\hat{k}_{\mu}\rangle=q$, which is fundamental to the definition of the $q$-Berezin range. Based on this classification, we further establish several structural properties of the $q$-Berezin range.

  For any $\lambda,\mu\in \mathbb{D}$,  we have\begin{align*}\langle\hat{k}_{\lambda},\hat{k}_{\mu}\rangle
  =\frac{{k}_{\lambda}(\mu)}{\|k_{\lambda}\|\|k_{\mu}\|}=\frac{(1-|\lambda|^2)(1-|\mu|^2)}{(1-\overline{\lambda}\mu)^2}.\end{align*}
Since $\lambda,\mu\in \mathbb{D}$, it follows that $\langle\hat{k}_{\lambda},\hat{k}_{\mu}\rangle\neq 0$.
Let $q\in(0,1]$  be fixed and  $\lambda\in \mathbb{D}$,  define the set
\begin{align*}
    S_{\lambda}=\left\{\mu\in \mathbb{D}: \langle \hat{k}_{\lambda},\hat{k}_{\mu}\rangle=q\right\}.
\end{align*}
We now present the following result concerning this set.
\begin{theorem}\label{kktt}
     Let $\lambda\in \mathbb{D}$ and $q\in (0,1]$. Then $S_0=\mathbb{T}_{\sqrt{1-q}}$ and  for $\lambda\neq 0$, $S_{\lambda}=\{\gamma^{+}_{\lambda}\lambda,\gamma^{-}_{\lambda}\lambda\}\subseteq \mathbb{D}$,  where $$\gamma_{\lambda}^{\pm}=\frac{q|\lambda|\pm(1-|\lambda|^2)\sqrt{1-q}}{|\lambda|\Big(1-(1-q)|\lambda|^2\Big)}.$$
\end{theorem}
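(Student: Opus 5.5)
We work from the formula already displayed,
\[
\langle\hat{k}_{\lambda},\hat{k}_{\mu}\rangle=\frac{(1-|\lambda|^2)(1-|\mu|^2)}{(1-\overline{\lambda}\mu)^2}.
\]
The condition $\langle\hat{k}_{\lambda},\hat{k}_{\mu}\rangle=q$ asks for a \emph{positive real} value, so it splits into two requirements: the argument of $(1-\overline{\lambda}\mu)^2$ must vanish, and the modulus condition must hold.

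\emph{Case $\lambda=0$.} The expression is simply $1-|\mu|^2$. Hence $\mu\in S_0$ exactly when $|\mu|^2=1-q$, which gives $S_0=\mathbb{T}_{\sqrt{1-q}}$ immediately. For $q=1$ this circle degenerates to $\{0\}$, which is consistent with the statement.

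\emph{Case $\lambda\neq0$, argument condition.} We need $(1-\overline{\lambda}\mu)^2>0$. Since $|\overline{\lambda}\mu|<1$, the number $1-\overline{\lambda}\mu$ lies in the open right half-plane, so its square is positive real iff $1-\overline{\lambda}\mu$ is itself positive real. That is, $\overline{\lambda}\mu\in\mathbb{R}$, which forces $\mu=\gamma\lambda$ for some real $\gamma$ with $|\gamma||\lambda|<1$. This is the key reduction: $\mu$ must lie on the real line through $0$ and $\lambda$.

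\emph{Case $\lambda\neq0$, modulus condition.} Write $r=|\lambda|$ and $s=\gamma r$, so $s\in(-1,1)$. The condition becomes
\[
(1-r^2)(1-s^2)=q(1-rs)^2,
\]
a quadratic in $s$:
\[
\bigl(1-r^2+qr^2\bigr)s^2-2qrs+\bigl(q-1+r^2\bigr)=0 .
\]
Its discriminant simplifies to $4(1-q)(1-r^2)^2$, using $q^2r^2-(1-(1-q)r^2)(q-(1-r^2))=(1-q)(1-r^2)^2$, which should be checked by expansion. The roots are therefore
\[
s=\frac{qr\pm(1-r^2)\sqrt{1-q}}{1-(1-q)r^2},
\]
and dividing by $r$ gives exactly $\gamma_\lambda^{\pm}$.

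The main obstacle is confirming that both roots satisfy $|s|<1$, so that $S_\lambda\subseteq\mathbb{D}$. For this I would evaluate the quadratic $p(s)$ at $s=\pm1$: $p(1)=q(1-r)^2>0$ and $p(-1)=q(1+r)^2>0$. The leading coefficient is positive, and the vertex $qr/(1-(1-q)r^2)$ lies in $(-1,1)$ because $qr<1-r^2+qr^2$ is equivalent to $(1-r)(1+r-qr)>0$. Together these place both real roots in $(-1,1)$.

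For $q=1$ the two roots coincide at $s=r$, so $\mu=\lambda$ and the two-point set degenerates to a single point. This is consistent with the stated formula.
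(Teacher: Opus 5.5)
Your proof is correct and follows the same route as the paper. The paper also reduces the condition to $\Im(\overline{\lambda}\mu)=0$, though it does so by comparing real and imaginary parts of $(1-\overline{\lambda}\mu)^2$ rather than by your half-plane argument. It then treats $\lambda=0$ directly and solves the quadratic obtained by substituting $\mu=\gamma\lambda$. Your discriminant identity, and your check via $p(\pm1)=q(1\mp r)^2>0$ and the vertex location that both roots give points of $\mathbb{D}$, supply a verification that the paper only asserts when it writes $S_{\lambda}=\{\gamma^{+}_{\lambda}\lambda,\gamma^{-}_{\lambda}\lambda\}\subseteq\mathbb{D}$.
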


\begin{proof}
    Let $\lambda=a_1+ib_1$, $\mu=a_2+ib_2\in \mathbb{D}$. Now, \begin{align}
       & \langle \hat{k}_{\lambda},\hat{k}_{\mu}\rangle=q\nonumber\\
       &\Rightarrow \frac{(1-|\lambda|^2)(1-|\mu|^2)}{(1-\overline{\lambda}\mu)^2}=q\nonumber\\
       &\Rightarrow \frac{(1-|\lambda|^2)(1-|\mu|^2)}{q}=\Big(1-\big(a_1a_2+b_1b_2+i(a_1b_2-b_1a_2)\big)\Big)^2.\label{zzii}
    \end{align} Comparing the real and imaginary parts gives $(1-a)b=0$, where $a=a_1a_2+b_1b_2$ and $b=a_1b_2-b_1a_2$. Since $a=\Re(\overline{\lambda}\mu)\le|\overline{\lambda}\mu|\le |\lambda||\mu|<1$, it follows that  $b=0$. Hence $\Im(\overline{\lambda}\mu)=0$.

    If $\lambda=0$, then $|\mu|=\sqrt{1-q}$. Therefore, $S_0= \mathbb{T}_{\sqrt{1-q}}.$

    Now, consider $\lambda\neq 0$. Since $b=0$, it implies that $\mu=\gamma\lambda$ for some $\gamma\in \mathbb{R}.$ Substituting $\mu=\gamma\lambda$ into \eqref{zzii}, we obtain
    \begin{align}
       & \frac{(1-|\lambda|^2)(1-\gamma^2|\lambda|^2)}{(1-\gamma|\lambda|^2)^2}=q\nonumber\\
      & \Rightarrow \gamma^2|\lambda|^2\Big((1-q)|\lambda|^2-1\Big)+2q\gamma|\lambda|^2+1-|\lambda|^2-q=0.\label{q1}
    \end{align} Denote the two roots of equation \eqref{q1} by $\gamma_{\lambda}^{\pm}$, i.e., $$\gamma_{\lambda}^{\pm}=\frac{q|\lambda|\pm(1-|\lambda|^2)\sqrt{1-q}}{|\lambda|\Big(1-(1-q)|\lambda|^2\Big)}.$$ Thus $\mu = \gamma^{\pm}_{\lambda}\lambda$ and consequently,  $S_{\lambda}=\{\gamma^{+}_{\lambda}\lambda,\gamma^{-}_{\lambda}\lambda\}\subseteq \mathbb{D}$.
\end{proof}
From the the condition $\langle\hat{k}_{\lambda},\hat{k}_{\mu}\rangle=q$, we obtain the following relation between $\lambda $ and $\mu$:
\begin{align}
    (1-|\lambda|^2)(1-|\mu|^2)=q(1-\overline{\lambda}\mu)^2.\label{q}
\end{align}   By Theorem \ref{kktt}, it follows that $\mu=\gamma^{\pm}_{\lambda}\lambda$ where $\gamma_{\lambda}^{\pm}=\frac{q|\lambda|\pm(1-|\lambda|^2)\sqrt{1-q}}{|\lambda|\Big(1-(1-q)|\lambda|^2\Big)}$ for $0<|\lambda|<1$. Consequently, we have $\overline{\lambda}\mu=\gamma^{\pm}_{\lambda}|\lambda|^2$. Also $\lambda=0$ implies $\overline{\lambda}\mu=0$.
The next lemma is useful in the remaining part of the paper.
\begin{lemma}\label{oopp}
    Let $q\in (0,1]$. Then
    \[\Big\{\gamma_{\lambda}^{+}|\lambda|^2:0<|\lambda|<1\Big\}\cup\Big\{\gamma_{\lambda}^{-}|\lambda|^2:0<|\lambda|<1\Big\}=\begin{cases}(0,1) & \text{if }q=1\\\left[\frac{\sqrt{q}-1}{\sqrt{q}+1},1\right)& \text{if }q\in(0,1)
     \end{cases}.\] 
\end{lemma}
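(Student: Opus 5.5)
The plan is to reduce everything to explicit one-variable functions of $t=|\lambda|\in(0,1)$ and to compute their ranges by elementary calculus. Put $c=\sqrt{1-q}\in[0,1)$, so that $q=1-c^2$ and $1-(1-q)|\lambda|^2=(1-ct)(1+ct)$. By the formula for $\gamma_\lambda^{\pm}$ in Theorem \ref{kktt},
\[
f_{\pm}(t):=\gamma_{\lambda}^{\pm}|\lambda|^2=\frac{qt^2\pm ct(1-t^2)}{(1-ct)(1+ct)} .
\]
The key simplification is the factorisation $qt^2+ct-ct^3=t(t+c)(1-ct)$, which is checked by expanding and using $q=1-c^2$, together with its analogue $qt^2-ct+ct^3=t(t-c)(1+ct)$. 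These give
\[
f_+(t)=\frac{t(t+c)}{1+ct},\qquad f_-(t)=\frac{t(t-c)}{1-ct}.
\]

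The case $q=1$ is immediate: here $c=0$, so $f_\pm(t)=t^2$, and the union of the two ranges is $(0,1)$.

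Now let $q\in(0,1)$, so $c\in(0,1)$.

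\emph{The range of $f_+$.} The function $f_+$ is continuous on $(0,1)$. Its derivative has numerator $t^2c+2t+c>0$, so $f_+$ is strictly increasing, with $f_+(0^+)=0$ and $f_+(1^-)=(1+c)/(1+c)=1$. Hence $f_+((0,1))=(0,1)$.

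\emph{The range of $f_-$.} Again $f_-(0^+)=0$ and $f_-(1^-)=(1-c)/(1-c)=1$, and $f_-$ vanishes at $t=c$ and is negative on $(0,c)$. Differentiating, the numerator of $f_-'$ is $2t-ct^2-c$. It vanishes on $(0,1)$ exactly at
\[
t_0=\frac{1-\sqrt{1-c^2}}{c}=\frac{1-\sqrt q}{\sqrt{1-q}}=\sqrt{\frac{1-\sqrt q}{1+\sqrt q}}\in(0,1).
\]
The derivative is negative before $t_0$ and positive after it, so $t_0$ is a global minimum. To evaluate $f_-(t_0)$ I will use the relation $ct_0^2=2t_0-c$, which simplifies the expression to $t_0(t_0-c)/(1-ct_0)$. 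Substituting $c=\sqrt{1-q}$ should then give
\[
f_-(t_0)=\frac{\sqrt q-1}{\sqrt q+1}.
\]
This algebraic check is the step I expect to be the main (if routine) obstacle. If direct substitution is messy, I would instead parametrise $c=2s/(1+s^2)$, which gives $t_0=s$ and $\sqrt q=(1-s^2)/(1+s^2)$, and verify the identity in terms of $s$.

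By continuity and the intermediate value theorem, $f_-$ attains every value in $\bigl[\frac{\sqrt q-1}{\sqrt q+1},1\bigr)$, and by the monotonicity just described it attains no other values. Since $\frac{\sqrt q-1}{\sqrt q+1}<0$, this interval contains $(0,1)=f_+((0,1))$. Therefore the union of the two ranges is $\bigl[\frac{\sqrt q-1}{\sqrt q+1},1\bigr)$, as claimed.
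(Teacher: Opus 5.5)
Your proposal is correct and follows the same route as the paper: you treat $\gamma^{\pm}_{\lambda}|\lambda|^2$ as functions of $t=|\lambda|$, show the $+$ branch increases from $0$ to $1$, and show the $-$ branch has a single interior minimum at $t_0=\frac{1-\sqrt q}{\sqrt{1-q}}$ with value $\frac{\sqrt q-1}{\sqrt q+1}$. Your factorisation $f_\pm(t)=\frac{t(t\pm c)}{1\pm ct}$ streamlines this by replacing the paper's quartic numerator of $\varphi'$ with $2t-ct^2-c$. The step you flagged closes at once: $ct_0^2=2t_0-c$ is equivalent to $t_0-c=-t_0(1-ct_0)$, which gives $f_-(t_0)=-t_0^2=\frac{\sqrt q-1}{\sqrt q+1}$. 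Your endpoint limits together with the intermediate value theorem also make explicit the surjectivity onto the interval, which the paper leaves implicit.
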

\begin{proof}
     If $q=1$, then clearly $\gamma_{\lambda}^{\pm}=1$. Hence, $\big\{|\lambda|^2:0<|\lambda|<1\big\}=(0,1)$.
     
    Now suppose $q\in (0,1)$ and $0<|\lambda|<1$. It follows that $\gamma^{-}_{\lambda}|\lambda|^2\le\gamma^{+}_{\lambda}|\lambda|^2$. Let $a=\sqrt{1-q}$ and $|\lambda|=t$. Define the function $\varphi:(0,1)\to \mathbb{R}$ by 
     \begin{align*}
         \varphi(t)=\gamma^{-}_{\lambda}t^2=\frac{(1-a^2)t^2-at(1-t^2)}{1-a^2t^2}.
     \end{align*}
    Differentiating, we obtain \begin{align*}
         \varphi^{\prime}(t)=\frac{-a+2(1-a^2)t+(3a-a^3)t^2-a^3t^4}{\big(1-a^2t^2\big)^2}.
     \end{align*}
     Denote the numerator by $$f(t)=-a+2(1-a^2)t+(3a-a^3)t^2-a^3t^4.$$
Observe that
 $$\lim_{t\to 0^+}f(t)=-a<0,$$ so $f(t)<0$ near $0$.  Consequently, $\varphi^{\prime}(t)<0$ near $0$.\\
      On the other hand, $$\lim_{t\to 1^-}f(t)=2(1+a)(1-a^2)>0,$$ which implies that $f(t)>0$ near $1$. Hence, $\varphi^{\prime}(t)>0$ near $1$.

     Next, $f^{\prime}(t)=2\big(1-a^2+(3a-a^3)t-2a^3t^3\big)$. Since, $f^{\prime}(t)>0$ on $(0,1)$, the function $f$ is strictly increasing on $(0,1)$. Because $f(t)<0$ near $0$ and  $f(t)>0$ near $1$, it follows that $f$ has exactly one root $t_0\in (0,1)$. Therefore, 
         \begin{align*}
             \varphi^{\prime}(t)&<0, \,\, \mbox{for}\,\, t\in(0,t_0)\\
             &\,\,\,\,\,\,\,\,\,\,\,\,\,\,\mbox{and}\\
             \varphi^{\prime}(t)&>0, \,\, \mbox{for}\,\, t\in(t_0,1).
         \end{align*}
         Thus, $\varphi(t)$ attains its global minimum at $t_0$. Solving the equation $\varphi^{\prime}(t)=0$, we obtain the roots $\frac{1\pm \sqrt{q}}{\sqrt{1-q}}$. Among these, only $t_0=\frac{1- \sqrt{q}}{\sqrt{1-q}} $ belongs to the interval $(0,1)$. Hence the global minimum value of $\varphi$ is $\frac{\sqrt{q}-1}{\sqrt{q}+1}$.\\

      Now define another function $\psi:(0,1)\to \mathbb{R}$  by    \begin{align*}
         \psi(t)=\gamma^{+}_{\lambda}t^2=\frac{(1-a^2)t^2+at(1-t^2)}{1-a^2t^2}.
     \end{align*} We have \begin{align*}
         \psi^{\prime}(t)=\frac{(1-at)^2(at^2+2t+a)}{(1-a^2t^2)^2}>0,\,\,\,\mbox{for all } \,\, t\in(0,1).
     \end{align*} Thus, $\psi$  is strictly increasing on $(0,1)$, and therefore  $\sup_{0<t<1}\psi(t)=1$.
\end{proof}

A bounded linear operator $T:A^2(\mathbb{D})\to A^2(\mathbb{D})$ is called a finite-rank operator if dim$(T(A^2(\mathbb{D}))<\infty$. For any $f\in  A^2(\mathbb{D})$, a finite-rank operator $T$ admits the representation
\[T(f)=\sum_{i=0}^n\langle f,g_i\rangle h_i,\,\,\, \mbox{for some } g_i,h_i\in A^2(\mathbb{D}).\] It was established in \cite{atul} that the Berezin range associated with such a finite-rank operator on $A^2(\mathbb{D})$ is symmetric about the real axis. Motivated by this result, we next investigate the following analogous geometric property of the $q$-Berezin range for finite-rank operators on   $A^2(\mathbb{D})$.

 \begin{theorem}\label{symmetric}
     Let $T(f)=\sum_{i=0}^n\langle f,g_i\rangle h_i$, where $f,g_i,h_i\in A^2(\mathbb{D})$ for all $i\in \{0,1,\cdots, n\}$ be  a finite rank operator on $A^2(\mathbb{D})$ and $q\in (0,1)$ such that $g_i(z)=\sum_{m=0}^{\infty}a_{i,m}z^m$ and $h_i(z)=\sum_{m=0}^{\infty}b_{i,m}z^m$. If $a_{i,m},b_{i,m}\in \mathbb{R}$, then the $q$-Berezin range of $T$, $\textbf{Ber}_q(T)$ is symmetric about real axis.
 \end{theorem}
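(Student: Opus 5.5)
Our plan is to show that the map $(\lambda,\mu)\mapsto(\overline{\lambda},\overline{\mu})$ preserves the constraint $\langle \hat{k}_{\lambda},\hat{k}_{\mu}\rangle=q$ and sends $\langle T\hat{k}_{\lambda},\hat{k}_{\mu}\rangle$ to its complex conjugate. Since $\textbf{Ber}_q(T)$ is the set of all such values, this gives $\overline{\textbf{Ber}_q(T)}\subseteq \textbf{Ber}_q(T)$. Applying the same inclusion to conjugates yields equality, which is exactly symmetry about the real axis.

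The first step is the constraint. We have
\[
\langle \hat{k}_{\overline{\lambda}},\hat{k}_{\overline{\mu}}\rangle=\frac{(1-|\lambda|^2)(1-|\mu|^2)}{(1-\lambda\overline{\mu})^2}=\overline{\langle \hat{k}_{\lambda},\hat{k}_{\mu}\rangle}=\overline{q}=q,
\]
because $q$ is real. Alternatively, Theorem \ref{kktt} can be used: $\mu=\gamma^{\pm}_\lambda\lambda$ with $\gamma^{\pm}_\lambda$ real, so $\overline{\mu}=\gamma^{\pm}_{\overline{\lambda}}\overline{\lambda}$, since $\gamma^\pm_\lambda$ depends only on $|\lambda|$. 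The case $\lambda=0$, where $S_0$ is a circle, is closed under conjugation as well. So $\overline{\mu}\in S_{\overline{\lambda}}$.

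The second step is the symbol. By the reproducing property,
\[
\langle T\hat{k}_{\lambda},\hat{k}_{\mu}\rangle=\sum_{i=0}^n\langle \hat{k}_{\lambda},g_i\rangle\langle h_i,\hat{k}_{\mu}\rangle=\frac{1}{\|k_\lambda\|\|k_\mu\|}\sum_{i=0}^n\overline{g_i(\lambda)}\,h_i(\mu).
\]
The norms $\|k_\lambda\|=(1-|\lambda|^2)^{-1}$ and $\|k_\mu\|$ are invariant under conjugation of the point. Because the Taylor coefficients $a_{i,m},b_{i,m}$ are real, we have $g_i(\overline{z})=\overline{g_i(z)}$ and $h_i(\overline{z})=\overline{h_i(z)}$ for $z\in\mathbb{D}$. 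Hence
\[
\sum_i\overline{g_i(\overline{\lambda})}\,h_i(\overline{\mu})=\sum_i g_i(\lambda)\overline{h_i(\mu)}=\overline{\sum_i\overline{g_i(\lambda)}h_i(\mu)},
\]
so $\langle T\hat{k}_{\overline{\lambda}},\hat{k}_{\overline{\mu}}\rangle=\overline{\langle T\hat{k}_{\lambda},\hat{k}_{\mu}\rangle}$.

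Combining the two steps, every $w\in\textbf{Ber}_q(T)$ has $\overline{w}\in\textbf{Ber}_q(T)$, which proves the claim. The only point needing care is the identity $g(\overline{z})=\overline{g(z)}$ for power series with real coefficients, which holds termwise because the series converge on $\mathbb{D}$. No step seems to be a genuine obstacle; the mild subtlety is simply checking that conjugation preserves the constraint set, which the first step handles.
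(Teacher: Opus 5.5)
Your proof is correct and rests on the same idea as the paper's: pair $(\lambda,\mu)$ with $(\overline{\lambda},\overline{\mu})$ and use the real Taylor coefficients to get $\langle T\hat{k}_{\overline{\lambda}},\hat{k}_{\overline{\mu}}\rangle=\overline{\langle T\hat{k}_{\lambda},\hat{k}_{\mu}\rangle}$. The paper uses the parametrization $\mu=p_{\lambda}\lambda$ from Theorem \ref{kktt} and a case split at $\lambda=0$, so that the prefactor $q(1-p_{\lambda}|\lambda|^2)^2$ is visibly real. You avoid that parametrization, and you also make explicit the check $\overline{\mu}\in S_{\overline{\lambda}}$, which the paper leaves implicit.
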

 \begin{proof}
     Let $\lambda=r_1e^{i\theta_1}\in \mathbb{D}$, and $\mu\in S_{\lambda}$, i.e., $\langle \hat{k}_{\lambda},\hat{k}_{\mu}\rangle=q$. Then 
     \begin{align*}
         \langle T\hat{k}_{\lambda},\hat{k}_{\mu}\rangle&=(1-|\lambda|^2)(1-|\mu|^2)\langle Tk_{\lambda}, k_{\mu}\rangle\\
         &=(1-|\lambda|^2)(1-|\mu|^2)\Big\langle \sum_{i=0}^n\langle k_{\lambda},g_i\rangle h_i, k_{\mu}\Big\rangle\\
           &=(1-|\lambda|^2)(1-|\mu|^2)\left( \sum_{i=0}^n \overline{g_i(\lambda)} h_i(\mu)\right)\\
             &=q(1-\overline{\lambda}\mu)^2 \sum_{i=0}^n\left( \overline{\sum_{m=0}^{\infty}a_{i,m}\lambda^m} \sum_{m=0}^{\infty}b_{i,m}\mu^m\right).
     \end{align*}
     We have to show that $  \langle T\hat{k}_{\lambda},\hat{k}_{\mu}\rangle=\overline{  \langle T\hat{k}_{\overline{\lambda}},\hat{k}_{\overline{\mu}}\rangle}$. We consider the following cases.\\
     Case 1: For $\lambda=0$, then 
     \begin{align*}
          \langle T\hat{k}_{\lambda},\hat{k}_{\mu}\rangle&=q\sum_{i=0}^n\left(a_{i,0}\sum_{m=0}^{\infty}b_{i,m}\mu^m\right)\\
&=\overline{q\sum_{i=0}^n\left(a_{i,0}\sum_{m=0}^{\infty}b_{i,m}\overline{\mu}^m\right)}=\overline{  \langle T\hat{k}_{\overline{\lambda}},\hat{k}_{\overline{\mu}}\rangle}.\end{align*}
     Case 2: For $\lambda\neq 0$,  $\mu=p_{\lambda}\lambda$, where $p_{\lambda}\in \{\gamma_{\lambda}^+,\gamma_{\lambda}^-\}$. Then
     \begin{align*}
         \overline{  \langle T\hat{k}_{\overline{\lambda}},\hat{k}_{\overline{\mu}}\rangle}&=\overline{q\big(1-p_{\lambda} |\lambda|^2\big)^2\sum_{i=0}^{n}\left(\overline{\sum_{m=0}^{\infty}a_{i,m}\big(r_1e^{-i\theta_1}\big)^m}\sum_{m=0}^{\infty}b_{i,m}\big(p_{\lambda}r_1e^{-i\theta_1}\big)^m\right)}\\
         &=q\big(1-p_{\lambda} |\lambda|^2\big)^2\sum_{i=0}^{n}\left(\sum_{m=0}^{\infty}a_{i,m}\big(r_1e^{-i\theta_1}\big)^m\overline{\sum_{m=0}^{\infty}b_{i,m}\big(p_{\lambda} r_1e^{-i\theta_1}\big)^m}\right)\\
         &=q\big(1-p_{\lambda} |\lambda|^2\big)^2\sum_{i=0}^{n}\left(\overline{\sum_{m=0}^{\infty}a_{i,m}\big(r_1e^{i\theta_1}\big)^m}\sum_{m=0}^{\infty}b_{i,m}\big(p_{\lambda} r_1e^{i\theta_1}\big)^m\right)\\
         &=\langle T\hat{k}_{\lambda},\hat{k}_{\mu}\rangle.
     \end{align*} Hence, the proof is complete.
 \end{proof}
 \begin{cor}
     Let $T(f)=\sum_{i=0}^n\langle f,g_i\rangle h_i$, where $f,g_i,h_i\in A^2(\mathbb{D})$ for all $i\in\{0,1,\cdots, n\}$ be a finite rank operator on $A^2(\mathbb{D})$ and $q\in (0,1)$. If $g_i(z)=\sum_{m=0}^{\infty}a_{i,m}z^m$ and $h_i(z)=\sum_{m=0}^{\infty}b_{i,m}z^m$ such that $a_{i,m},b_{i,m}\in \mathbb{R}$ and also the $q$-Berezin range of $T$ on $ A^2(\mathbb{D})$ is convex, then $\Re\{\langle T\hat{k}_{\lambda},\hat{k}_{\mu}\rangle\}\in \textbf{Ber}_q(T)$ for every $\lambda\in \mathbb{D}$ and $\mu\in S_{\lambda}$. 
 \end{cor}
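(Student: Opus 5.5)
The corollary follows by combining Theorem \ref{symmetric} with the assumed convexity. Fix $\lambda\in\mathbb{D}$ and $\mu\in S_{\lambda}$, and set $w=\langle T\hat{k}_{\lambda},\hat{k}_{\mu}\rangle$. By definition, $w\in\textbf{Ber}_q(T)$. The plan is to show that $\overline{w}$ also lies in $\textbf{Ber}_q(T)$. Once that is done, convexity gives
\[
\Re(w)=\tfrac12 w+\tfrac12\overline{w}\in\textbf{Ber}_q(T),
\]
which is exactly the claim.

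To get $\overline{w}\in\textbf{Ber}_q(T)$, I will use the identity established in the proof of Theorem \ref{symmetric}, namely $\langle T\hat{k}_{\lambda},\hat{k}_{\mu}\rangle=\overline{\langle T\hat{k}_{\overline{\lambda}},\hat{k}_{\overline{\mu}}\rangle}$. This rewrites $\overline{w}$ as $\langle T\hat{k}_{\overline{\lambda}},\hat{k}_{\overline{\mu}}\rangle$. It remains to check that this is a genuine element of the $q$-Berezin range, that is, $\overline{\mu}\in S_{\overline{\lambda}}$.

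For that check I use the explicit formula
\[
\langle\hat{k}_{\lambda},\hat{k}_{\mu}\rangle=\frac{(1-|\lambda|^2)(1-|\mu|^2)}{(1-\overline{\lambda}\mu)^2}.
\]
Replacing $(\lambda,\mu)$ by $(\overline{\lambda},\overline{\mu})$ leaves the numerator unchanged and conjugates the denominator. So
\[
\langle\hat{k}_{\overline{\lambda}},\hat{k}_{\overline{\mu}}\rangle=\overline{\langle\hat{k}_{\lambda},\hat{k}_{\mu}\rangle}=\overline{q}=q,
\]
since $q$ is real. Alternatively, one can invoke Theorem \ref{kktt}: for $\lambda=0$ the set $S_0$ is a circle, which is invariant under conjugation, and for $\lambda\neq0$ we have $\mu=\gamma^{\pm}_{\lambda}\lambda$ with $\gamma^{\pm}_{\lambda}$ real and depending only on $|\lambda|$, so $\overline{\mu}=\gamma^{\pm}_{\overline{\lambda}}\overline{\lambda}$.

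No step is a real obstacle. The only point needing care is that Theorem \ref{symmetric} is stated as a set-level symmetry, while this argument needs the pointwise identity from its proof together with the membership $\overline{\mu}\in S_{\overline{\lambda}}$, which is why that membership is verified explicitly above.
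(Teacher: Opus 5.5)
Your proposal is correct and follows the paper's proof: you use the conjugation identity $\langle T\hat{k}_{\lambda},\hat{k}_{\mu}\rangle=\overline{\langle T\hat{k}_{\overline{\lambda}},\hat{k}_{\overline{\mu}}\rangle}$ from the proof of Theorem \ref{symmetric}, and then convexity places the midpoint $\tfrac12 w+\tfrac12\overline{w}=\Re(w)$ in $\textbf{Ber}_q(T)$. Your explicit check that $\overline{\mu}\in S_{\overline{\lambda}}$ supplies a step the paper uses without stating it.
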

 \begin{proof}
     Suppose that the $q$-Berezin range $\textbf{Ber}_q(T)$ is convex. By Theorem \ref{symmetric}, $\textbf{Ber}_q(T)$ is symmetric with respect to the real axis. Thus, we have 
     \begin{align*}
         \frac{1}{2}\langle T\hat{k}_{\lambda},\hat{k}_{\mu}\rangle+\frac{1}{2}  \langle T\hat{k}_{\overline{\lambda}},\hat{k}_{\overline{\mu}}\rangle= \Re\{\langle T\hat{k}_{\lambda},\hat{k}_{\mu}\rangle\}\in \textbf{Ber}_q(T).
     \end{align*}\end{proof}

    A rank one operator $T$ on $A^2(\mathbb{D})$ is of the form $T(f)=\langle f ,g \rangle h$ for some $g,h \in A^2(\mathbb{D})$. In this case 
    \begin{align*}
        \langle T\hat{k}_{\lambda},\hat{k}_{\mu}\rangle=
        \langle \hat{k}_{\lambda},g\rangle\langle h,\hat{k}_{\mu}\rangle =\overline{g(\lambda)}h(\mu).
    \end{align*} Therefore, the $q$-Berezin range of $T$ is given by\[\textbf{Ber}_q(T)=\big\{\overline{g(\lambda)}h(\mu):\lambda\in \mathbb{D}, \mu \in S_{\lambda}\big\}.\]
In \cite{atul}, the authors proved that the Berezin range of the rank-one operator of the form $T(f)=\langle f,z^n\rangle z^m$, where $f\in A^2(\mathbb{D})$ and $m,n\in \mathbb{N}$, is convex.
In the following result, we establish that, for $q\in(0,1)$, the $q$-Berezin range of the same rank-one operator is a circular disc centered at the origin.

    \begin{theorem}
        Let $T(f)=\langle f,z^n\rangle z^m$, where $f\in A^2(\mathbb{D})$, $m,n\in \mathbb{N}$  with $m>n$  and $q\in (0,1)$. Then $\textbf{Ber}_q(T)$ is a disc centered at the origin. 
    \end{theorem}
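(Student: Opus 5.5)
The plan is to compute the $q$-Berezin symbol of $T$ explicitly, using the description of $S_\lambda$ in Theorem \ref{kktt}. Then I will show that its values split into a modulus depending only on $|\lambda|$ and a phase that runs over the whole circle.

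\textbf{Step 1: the symbol.} For $\lambda\in\mathbb{D}$ and $\mu\in S_\lambda$ we have $\langle \hat k_\lambda, z^n\rangle=(1-|\lambda|^2)\overline{\lambda}^{\,n}$ and $\langle z^m,\hat k_\mu\rangle=(1-|\mu|^2)\mu^m$. Together with \eqref{q} this gives
\[
\langle T\hat k_\lambda,\hat k_\mu\rangle=(1-|\lambda|^2)(1-|\mu|^2)\overline{\lambda}^{\,n}\mu^m=q(1-\overline{\lambda}\mu)^2\,\overline{\lambda}^{\,n}\mu^m .
\]
If $\lambda=0$, then since $n\ge 1$ this value is $0$, so $0\in\textbf{Ber}_q(T)$. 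If $\lambda=te^{i\phi}\neq0$, Theorem \ref{kktt} gives $\mu=\gamma\lambda$ with $\gamma=\gamma^{\pm}_\lambda$ real and depending only on $t$. Substituting,
\[
\langle T\hat k_\lambda,\hat k_\mu\rangle=F_{\pm}(t)\,e^{i(m-n)\phi},\qquad F_\pm(t)=q\big(1-\gamma^{\pm}t^2\big)^2(\gamma^{\pm})^m t^{n+m}.
\]
Each $F_\pm$ is a real-valued function, possibly negative for the minus sign.

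\textbf{Step 2: reduction to moduli.} Since $m>n$, as $\phi$ ranges over $[0,2\pi)$ the factor $e^{i(m-n)\phi}$ covers the whole unit circle, independently of $t$. A negative sign of $F_-$ only amounts to a rotation by $\pi$. Hence
\[
\textbf{Ber}_q(T)=\{\rho e^{i\alpha}:\alpha\in\mathbb{R},\ \rho\in R\},\qquad R=\{0\}\cup|F_+|((0,1))\cup|F_-|((0,1)).
\]
It therefore suffices to show that $R$ is an interval of the form $[0,M]$ or $[0,M)$.

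\textbf{Step 3: $R$ is an interval starting at $0$.} This is the main obstacle. The functions $\gamma^\pm_\lambda$ are continuous in $t\in(0,1)$, so $|F_\pm|$ are continuous on $(0,1)$. I will compute the boundary behaviour from the formula for $\gamma^\pm$ with $a=\sqrt{1-q}$.

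As $t\to0^+$, we have $\gamma^\pm t\to\pm a$, so $\gamma^\pm t^2\to0$ and $(\gamma^\pm t)^m$ stays bounded. The remaining factor $t^{n}\to0$, hence $F_\pm(t)\to0$.

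As $t\to1^-$, we have $\gamma^\pm t^2\to1$ (consistent with Lemma \ref{oopp}), so $(1-\gamma^\pm t^2)^2\to0$ while the other factors stay bounded. Hence $F_\pm(t)\to0$.

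Thus each $|F_\pm|$ is a continuous function on $(0,1)$ that tends to $0$ at both endpoints and is not identically zero. Its image is therefore an interval $(0,M_\pm]$, with the maximum attained, or possibly an interval containing points arbitrarily close to $0$. Taking the union with $\{0\}$ gives $R=[0,M]$ with $M=\max\{M_+,M_-\}>0$.

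Consequently $\textbf{Ber}_q(T)$ is the closed disc centred at the origin with radius $M=\sup_{0<t<1}\max\{|F_+(t)|,|F_-(t)|\}$. The argument does not require an explicit value of $M$.
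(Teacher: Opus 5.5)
Your proof is correct and follows the same route as the paper: you compute the symbol as $q(1-\overline{\lambda}\mu)^2\overline{\lambda}^{\,n}\mu^m$, write it as $\nabla_\pm(|\lambda|)e^{i(m-n)\theta}$ via Theorem \ref{kktt}, and combine rotational invariance with the vanishing of the modulus as $|\lambda|\to 0$. Your extra step, that $F_\pm(t)\to 0$ also as $t\to 1^-$ together with continuity of $F_\pm$ on $(0,1)$, makes explicit the connectedness of the set of radii, which the paper leaves implicit, and it also shows that the disc is closed.
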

    \begin{proof} Let $\lambda=|\lambda|e^{i\theta}\in \mathbb{D}$ and $\mu \in S_{\lambda}$, i.e.,  $\langle \hat{k}_{\lambda},\hat{k}_{\mu}\rangle=q$. Then   
          \begin{align*}
         \langle T\hat{k}_{\lambda},\hat{k}_{\mu}\rangle&=(1-|\lambda|^2)(1-|\mu|^2)\langle Tk_{\lambda}, k_{\mu}\rangle\\
         &=q(1-\overline{\lambda}\mu)^2\left\langle \langle k_{\lambda},z^n\rangle z^m, k_{\mu}\right\rangle\\
          &=q(1-\overline{\lambda}\mu)^2\langle \overline{\lambda}^n z^m, k_{\mu}\rangle\\
            &=q(1-\overline{\lambda}\mu)^2\overline{\lambda}^n\mu^m.
         \end{align*}
         If $\lambda=0$, then clearly $\langle T\hat{k}_{\lambda},\hat{k}_{\mu}\rangle=0$.\\
          For  $\lambda\neq 0$, we have $\mu=p_{\lambda} \lambda$, where $p_{\lambda}\in \{\gamma_{\lambda}^+,\gamma_{\lambda}^-\}$. 
We first consider the case where $p_{\lambda}=\gamma_{\lambda}^+$.  Then
         \begin{align*} \langle T\hat{k}_{\lambda},\hat{k}_{\mu}\rangle&=q(1-\gamma_{\lambda}^+|\lambda|^2)^2(\gamma_{\lambda}^+)^m|\lambda|^{m+n}e^{i(m-n)\theta}\\
                 &=\nabla_+(|\lambda|)e^{i(m-n)\theta},
         \end{align*}  where the function $\nabla_+: (0,1)\to \mathbb{R}$ is defined by \[\nabla_+(|\lambda|)=q(1-\gamma_{\lambda}^+|\lambda|^2)^2(\gamma_{\lambda}^+)^m|\lambda|^{m+n}.\]Similarly, when $p_{\lambda}= \gamma_\lambda^-$, we obtain  $\langle T\hat{k}_{\lambda},\hat{k}_{\mu}\rangle=\nabla_-(|\lambda|)e^{i(m-n)\theta}$, where \[\nabla_-(|\lambda|)=q(1-\gamma_{\lambda}^-|\lambda|^2)^2(\gamma_{\lambda}^-)^m|\lambda|^{m+n}.\]
         Therefore, 
         \begin{align*}
             \textbf{Ber}_q(T)=&\Big\{\nabla_+(|\lambda|)e^{i(m-n)\theta}, 0< |\lambda|<1, 0\le \theta<2\pi\Big\}\nonumber\\&\cup \Big\{\nabla_-(|\lambda|)e^{i(m-n)\theta}, 0< |\lambda|<1, 0\le \theta<2\pi\Big\}.
         \end{align*} 
           Let $\eta(\neq 0)\in  \textbf{Ber}_q(T)$. Then $\eta$ be equal to either \begin{align*}
                \nabla_+(|\lambda_1|)e^{i(m-n)\theta_1} \,\,\,\mbox{or}\,\,\, \nabla_-(|\lambda_2|)e^{i(m-n)\theta_2}
           \end{align*}
           for some $0<|\lambda_1|,|\lambda_2|<1$ and $0\le \theta_1,\theta_2<2\pi$. Clearly, for any $\xi\in [0,2\pi)$, $\eta e^{i\xi}$  also belongs to $\textbf{Ber}_q(T)$. Moreover, $\lim_{|\lambda|\to 0}\big|\nabla_+(|\lambda|)\big|=0 $ and $\lim_{|\lambda|\to 0}\big|\nabla_-(|\lambda|)\big|=0 $. Since $0\in \textbf{Ber}_q(T) $, it follows that $  \textbf{Ber}_q(T)$  is a disc with center at origin. 
     \end{proof}

Let $T$ be a bounded linear operator on $A^2(\mathbb{D})$. Suppose that $T$ is diagonal with respect to the orthonormal basis $\{\sqrt{n+1}z^n\}_{n=0}^{\infty}$. Then $T$ admits the representation $$T(f)=\sum_{n=0}^{\infty}\alpha_n(n+1)\langle f, z^n\rangle z^n,$$ where $\{\alpha_n\}_{n=0}^{\infty}$ is a bounded sequence. The following theorem establishes that the $q$-Berezin range of such diagonal operators is convex.

     \begin{theorem}
          Let $T(f)=\sum_{n=0}^{\infty}\alpha_n (n+1)\langle f, z^n\rangle z^n$, where $f\in A^2(\mathbb{D})$, $\{\alpha_n\}_{n=0}^{\infty}$ is a bounded sequence of real numbers, be an operator on $A^2(\mathbb{D})$ and $q\in (0,1]$. Then the $q$-Berezin range of $T$ is convex.
     \end{theorem}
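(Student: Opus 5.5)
The plan is to compute the $q$-Berezin symbol of $T$ explicitly and show that it depends on the pair $(\lambda,\mu)$ only through the real number $w=\overline{\lambda}\mu$. The range will then be the image of an interval under a real-valued continuous function, hence an interval.

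\emph{Step 1: the symbol.} For $\lambda\in\mathbb{D}$ and $\mu\in S_\lambda$, I would write
\[
\langle T\hat{k}_\lambda,\hat{k}_\mu\rangle=(1-|\lambda|^2)(1-|\mu|^2)\langle Tk_\lambda,k_\mu\rangle .
\]
Since $\langle k_\lambda,z^n\rangle=\overline{\lambda}^n$, we get $Tk_\lambda=\sum_{n}\alpha_n(n+1)\overline{\lambda}^n z^n$. Evaluating at $\mu$ gives $\langle Tk_\lambda,k_\mu\rangle=\sum_n\alpha_n(n+1)(\overline{\lambda}\mu)^n$. By \eqref{q}, $(1-|\lambda|^2)(1-|\mu|^2)=q(1-\overline{\lambda}\mu)^2$, so
\[
\langle T\hat{k}_\lambda,\hat{k}_\mu\rangle=F(\overline{\lambda}\mu),\qquad F(w):=q(1-w)^2\sum_{n=0}^\infty \alpha_n(n+1)w^n .
\]
The series converges absolutely for $|w|<1$ because $\{\alpha_n\}$ is bounded. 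Hence $F$ is analytic, in particular continuous, on $\mathbb{D}$.

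\emph{Step 2: the set of admissible $w$.} By Theorem \ref{kktt}, $\lambda=0$ forces $w=0$. For $\lambda\neq0$ we have $\mu=\gamma^{\pm}_\lambda\lambda$, so $w=\gamma^{\pm}_\lambda|\lambda|^2\in\mathbb{R}$. By Lemma \ref{oopp}, the set of attained values of $w$ is the interval
\[
I=\begin{cases}[0,1) & \text{if } q=1,\\[2pt] \left[\frac{\sqrt q-1}{\sqrt q+1},1\right) & \text{if } q\in(0,1).\end{cases}
\]
In the case $q<1$, the value $0$ already lies in this interval. Moreover every $w\in I$ is attained by some admissible pair $(\lambda,\mu)$. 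Consequently $\textbf{Ber}_q(T)=F(I)$.

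\emph{Step 3: conclusion.} Since the $\alpha_n$ are real and $w\in I\subseteq\mathbb{R}$, the function $F$ is real-valued and continuous on the connected set $I$. Hence $F(I)$ is an interval in $\mathbb{R}$, and therefore convex.

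The only delicate point is Step 2. There one must make sure that the description of $\textbf{Ber}_q(T)$ as $F(I)$ is exact, that is, that every point of $I$ is attained and nothing outside $I$ occurs. This is exactly what Theorem \ref{kktt} and Lemma \ref{oopp} supply, together with the observation that $\overline{\lambda}\mu$ is always real. The realness of the $\alpha_n$ is used only in Step 3, to make $F$ real on $I$.
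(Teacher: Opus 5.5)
Your proposal is correct and follows essentially the same route as the paper. You write the symbol as $F(\overline{\lambda}\mu)$ with $F(w)=q(1-w)^2\sum_{n}\alpha_n(n+1)w^n$, use Theorem \ref{kktt} and Lemma \ref{oopp} to show that the admissible values of $\overline{\lambda}\mu$ form exactly an interval, and conclude that the range is the continuous real image of an interval. You are also more explicit than the paper about why $F$ is continuous (boundedness of $\{\alpha_n\}$) and real-valued there (realness of the $\alpha_n$ and of $\overline{\lambda}\mu$).
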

     \begin{proof}
        Let $\lambda\in \mathbb{D}$ and $\mu \in S_{\lambda}$, i.e.,  $\langle \hat{k}_{\lambda},\hat{k}_{\mu}\rangle=q$. Then  
        \begin{align*}
         \langle T\hat{k}_{\lambda},\hat{k}_{\mu}\rangle&=(1-|\lambda|^2)(1-|\mu|^2)\langle Tk_{\lambda}, k_{\mu}\rangle\\
         &=q(1-\overline{\lambda}\mu)^2\left\langle \sum_{n=0}^{\infty}\alpha_n(n+1)\langle  k_{\lambda}, z^n\rangle z^n,k_{\mu}\right\rangle\\
          &=q(1-\overline{\lambda}\mu)^2\sum_{n=0}^{\infty}\alpha_n(n+1) (\overline{\lambda}\mu)^n
         \end{align*} In the case $\lambda=0$,  we get $ \langle T\hat{k}_{\lambda},\hat{k}_{\mu}\rangle=q\alpha_0$. By Lemma \ref{oopp}, it follows that \begin{align*}
             \textbf{Ber}_q(T)&=\Big\{q(1-t)^2\sum_{n=0}^{\infty}\alpha_n(n+1) t^n:t\in \left[\frac{\sqrt{q}-1}{\sqrt{q}+1},1\right)\Big\},\,\,\mbox{for}\,\, q\in (0,1)\\
             \mbox{and}\,\,\, \textbf{Ber}(T)&=\Big\{(1-t^2)^2\sum_{n=0}^{\infty}\alpha_n(n+1) t^{2n}:0\le t<1\Big\},\,\,\mbox{for} \,\,q=1.
         \end{align*} Thus, in both cases, $\textbf{Ber}_q(T)$ is the image of an interval under a continuous function. Since the image of a connected set under a continuous map remains connected, and every connected subset of $\mathbb{R}$ is convex, we conclude that $\textbf{Ber}_q(T)$ is convex.
         \end{proof}
        The Berezin range of composition operators acting on the Hardy-Hilbert space and Bergman space has been studied in \cite{Composition,Bulletin des}.  Next, we investigate the convexity of the $q$-Berezin range, for $q\in(0,1)$,  associated with
composition operators on the Bergman space $A^2(\mathbb{D})$. Let $\phi:\mathbb{D}\to \mathbb{D}$ be a complex-valued function.  The composition operator $C_{\phi}$ acting on $A^2(\mathbb{D})$ is defined by $C_{\phi}f=f\circ\phi$.

First, we focus on the case $\phi(z)=\zeta z$ with $\zeta\in \overline{\mathbb{D}}$. Let $\lambda\in \mathbb{D}$ and $\mu\in S_{\lambda}$, i.e., $\langle k_{\lambda}, k_{\mu}\rangle=q$. Then \begin{align*}
    \langle C_{\phi} \hat{k}_{\lambda},\hat{k}_{\mu}\rangle=(1-|\lambda|^2)(1-|\mu|^2) C_{\phi}k_{\lambda}(\mu)=\frac{q(1-\overline{\lambda}\mu)^2}{(1-\zeta\overline{\lambda}\mu)^2}. 
\end{align*} In the particular case, if $\lambda=0$, then $ \langle C_{\phi} \hat{k}_{\lambda},\hat{k}_{\mu}\rangle=q$. Therefore,  for $q\in (0,1)$, it follows  from Lemma  \ref{oopp}
 that
 \begin{align}
    \textbf{ Ber}_q(C_{\phi})=\left\{\frac{q(1-t)^2}{(1-\zeta t)^2}: t\in \left[ \frac{\sqrt{q}-1}{\sqrt{q}+1},1\right)\right\}.\label{maa5}
 \end{align} 
 \begin{lemma}\label{qqpp}
     Let  $q\in(0,1)$ and  $\phi(z)=\zeta z$ where $\zeta\in \overline{\mathbb{D}}$. Then 
\begin{align*}
    &(i)~~  \textbf{ Ber}_q(C_{\phi})\,\, \mbox{  is a singleton if and only if } \,\, \zeta=1.\\
    &(ii)~~ \Im( \textbf{ Ber}_q(C_{\phi}))=\{0\}\,\, \mbox{if and only if } \,\, \Im(\zeta)=0.
\end{align*}
 \end{lemma}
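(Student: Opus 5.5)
The whole argument runs through the explicit description \eqref{maa5}. There $\textbf{Ber}_q(C_{\phi})$ is the image of the interval $J=\left[\frac{\sqrt{q}-1}{\sqrt{q}+1},1\right)$ under the map
\[
F(t)=\frac{q(1-t)^2}{(1-\zeta t)^2}.
\]
This map is continuous, since $|\zeta t|<1$ for $t\in J$ with $|t|<1$. Note that $J$ contains $0$ and contains a nondegenerate interval around $0$, because $\frac{\sqrt{q}-1}{\sqrt{q}+1}<0$ for $q\in(0,1)$. Both parts (i) and (ii) will be read off from elementary properties of $F$ on $J$.

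For (i), the easy direction is immediate: if $\zeta=1$, then $F(t)=q$ for every $t$, so $\textbf{Ber}_q(C_{\phi})=\{q\}$. Conversely, suppose the range is a singleton. Since $F(0)=q$, we get $F(t)=q$ for all $t\in J$. Now use $\sqrt{q}\neq 1$ and pick the left endpoint $t_0=\frac{\sqrt{q}-1}{\sqrt{q}+1}\in J$, where $1-t_0=\frac{2}{\sqrt q+1}$. From $F(t_0)=q$ we get $(1-t_0)^2=(1-\zeta t_0)^2$, so $1-\zeta t_0=\pm(1-t_0)$. The plus sign gives $\zeta=1$. The minus sign gives $\zeta t_0=2-t_0$, and then $|\zeta|=\left|\frac{2-t_0}{t_0}\right|>1$ because $t_0<0$, which contradicts $\zeta\in\overline{\mathbb{D}}$. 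A cleaner alternative is to say that $F$ is real analytic near $0$ with $F'(0)=2q(\zeta-1)$, so constancy forces $\zeta=1$. I would present this derivative argument, since it uses only that $J$ contains a neighbourhood of $0$.

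For (ii), if $\Im\zeta=0$ then $F(t)$ is clearly real for every $t\in J$. Conversely, suppose $F(t)\in\mathbb{R}$ for all $t\in J$. Then $(1-\zeta t)^2/(1-t)^2=q/F(t)$ is real and positive, so $\frac{1-\zeta t}{1-t}$ is real for $t$ near $0$ by continuity; equivalently, its square root branch stays near $1$. Writing $\frac{1-\zeta t}{1-t}=1+\frac{(1-\zeta)t}{1-t}$, reality for a small nonzero real $t$ forces $(1-\zeta)$ to be real, that is, $\Im\zeta=0$. An alternative is to compare imaginary parts at first order: $F(t)=q+2q(\zeta-1)t+O(t^2)$, so $\Im F(t)=2q\,\Im(\zeta)\,t+O(t^2)$, which vanishes for all small $t$ only if $\Im\zeta=0$.

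The only delicate point is the branch of the square root in (ii). I would avoid it by using the first-order Taylor expansion argument, which handles both (i) and (ii) uniformly via $F'(0)=2q(\zeta-1)$.
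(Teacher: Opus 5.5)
Your proof is correct and takes essentially the same route as the paper: both parts are read off from the parametrization \eqref{maa5}, with $F(0)=q$ reducing (i) to the identity $F\equiv q$ on $J$. The differences are minor. In (i), your computation $F'(0)=2q(\zeta-1)$ actually justifies that $F\equiv q$ forces $\zeta=1$, which the paper only asserts. In (ii), you use the first-order term $\Im F(t)=2q\,\Im(\zeta)\,t+O(t^2)$, whereas the paper writes $\Im F(t)$ in closed form and notes that every factor other than $\Im(\zeta)$ is nonzero for $t\neq 0$.
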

 \begin{proof} 
     $(i)$~~ Since $\zeta=1$ trivially implies $\textbf{Ber}_q(C_\phi)$ is  singleton. Thus, it remains to prove that   $\textbf{Ber}_q(C_\phi)$ is singleton when $\zeta=1$.
    Let $ \textbf{ Ber}_q(C_{\phi})$ is a
singleton. Since $  \frac{q(1-t)^2}{(1-\zeta t)^2}=q$ at $t=0$ then we have, $ \textbf{ Ber}_q(C_{\phi})=\{q\}$, and consequently
     \begin{align*}
         \frac{q(1-t)^2}{(1-\zeta t)^2}=q\,\,\, \mbox{ holds  for all}\,\,  t\in \left[ \frac{\sqrt{q}-1}{\sqrt{q}+1},1\right)
     \end{align*} which holds only when $\zeta=1$.\\
   $(ii)$~~ Let $\zeta=a+ib$. Then 
   \begin{align*}
       \Im( \textbf{ Ber}_q(C_{\phi}))=\Im\left(\frac{q(1-t)^2}{(1-\zeta t)^2}\right)  =\frac{2q(1-t^2)bt(1-at)}{((1-at)^2+(bt)^2)^2},
   \end{align*}for all $t\in \left[ \frac{\sqrt{q}-1}{\sqrt{q}+1},1\right)$. From this, it is clear that $\Im( \textbf{ Ber}_q(C_{\phi}))=\{0\}$ if and only if $\Im(\zeta)=0$.
 \end{proof}

 \begin{theorem}
     Let $\zeta\in \overline{\mathbb{D}}$, $\phi(z)=\zeta z$ and $q\in (0,1)$. Then the $q$-Berezin range of $C_{\phi}$ acting on $A^2(\mathbb{D})$ is convex if and only if $\zeta\in [-1,1]$.
 \end{theorem}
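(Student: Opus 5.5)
We will work throughout with the explicit description \eqref{maa5}: $\textbf{Ber}_q(C_{\phi})$ is the image of the interval $J=\left[\frac{\sqrt{q}-1}{\sqrt{q}+1},1\right)$ under the continuous map $g(t)=\frac{q(1-t)^2}{(1-\zeta t)^2}$. Note that $g$ is well defined on $J$, since $|\zeta t|<1$ there. The argument splits according to whether $\zeta$ is real.

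\emph{Sufficiency.} Suppose $\zeta\in[-1,1]$. Then $g$ is real-valued, because $1-\zeta t$ is real and nonzero. The set $\textbf{Ber}_q(C_\phi)=g(J)$ is therefore a connected subset of $\mathbb{R}$, being the continuous image of an interval. Hence it is an interval, and so it is convex. This is the same argument used above for the diagonal operators. The case $\zeta=1$ gives the singleton $\{q\}$ by Lemma \ref{qqpp}(i), which is trivially convex.

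\emph{Necessity.} Suppose $\Im(\zeta)=b\neq 0$ with $\zeta=a+ib$. By Lemma \ref{qqpp}(ii), $\textbf{Ber}_q(C_\phi)$ is not contained in the real line. We will show it is a curve with empty interior that contains three non-collinear points, so it cannot be convex.

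First, $g$ is a real-analytic curve, hence the image of an interval. Its image has empty interior in $\mathbb{C}$, since a $C^1$ curve has planar measure zero. A convex subset of $\mathbb{C}$ with empty interior must lie in a line. So it suffices to show that $g(J)$ does not lie in a single line.

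To see this, we use three concrete points:
\begin{itemize}
\item $g(0)=q$;
\item $g(t)\to 0$ as $t\to 1^-$, provided $\zeta\neq 1$, which holds because $b\neq 0$;
\item a point $g(t_1)$ with $t_1\in(0,1)$.
\end{itemize}
By the formula in Lemma \ref{qqpp}(ii),
\[
\Im g(t_1)=\frac{2q(1-t_1^2)bt_1(1-at_1)}{\bigl((1-at_1)^2+(bt_1)^2\bigr)^2},
\]
which is nonzero because $1-at_1>0$.

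Now suppose $g(J)$ lies in a line $L$. Then $q\in L$, and $0\in L$ as well, since $0$ lies in the closure of $g(J)$ and lines are closed. The only line through $0$ and $q$ is the real axis, so $L=\mathbb{R}$. This contradicts $\Im g(t_1)\neq 0$. Therefore $g(J)$ is not convex whenever $\Im(\zeta)\neq 0$, which proves the theorem.

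\emph{Main obstacle.} The delicate step is the reduction that a convex set with empty interior lies in a line. If the measure-zero argument feels heavy, there is a more elementary alternative. Take a convex combination of $q$ and $g(t_1)$ that lies off the curve, and check directly that no $t\in J$ satisfies $g(t)=$ that point. Doing this requires solving $g(t)=w$, which amounts to a quadratic in $\sqrt{g}$-type variables. I expect the measure-theoretic route to be cleaner.
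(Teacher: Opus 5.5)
Your proof is correct and follows the paper's route. You parametrize $\textbf{Ber}_q(C_\phi)$ via \eqref{maa5} and obtain a real interval when $\zeta\in[-1,1]$; you use connectedness where the paper uses monotonicity to identify the interval $\bigl(0,\frac{4q}{(\sqrt{q}+1-\zeta\sqrt{q}+\zeta)^2}\bigr]$. For $\Im\zeta\neq 0$ you force the set into the line through $q$ and $0$, which contradicts Lemma \ref{qqpp}(ii). Your measure-zero argument justifies the paper's unproved claim that a convex ``path'' must be a segment or a point.

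One small fix: the imaginary part should have numerator $2q(1-t_1)^2bt_1(1-at_1)$. The factor $(1-t_1^2)$ you copied is a typo in the lemma. The corrected expression is still nonzero for $t_1\in(0,1)$, so your conclusion is unaffected.
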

 \begin{proof} 
      Let $\lambda\in \mathbb{D}$ and $\mu\in S_{\lambda}$, i.e.,  $\langle \hat{k}_{\lambda},\hat{k}_{\mu}\rangle=q$. Then  from equation \eqref{maa5}  we get, 
      \begin{align*}
             \textbf{ Ber}_q(C_{\phi})=\left\{\frac{q(1-t)^2}{(1-\zeta t)^2}: t\in \left[ \frac{\sqrt{q}-1}{\sqrt{q}+1},1\right)\right\}.
      \end{align*} Suppose $\zeta=1$ then $\textbf{Ber}_q(C_{\phi})=\{q\}$, which is convex. Similarly for $-1\le \zeta<1$, $\frac{q(1-t)^2}{(1-\zeta  t)^2}$ is decreasing function in $t$. Now $\lim_{t\to 1^-}\frac{q(1-t)^2}{(1-\zeta t)^2}=0$ and \begin{align*}
          \max\left\{\frac{q(1-t)^2}{(1-\zeta t)^2}: t\in \left[\frac{\sqrt{q}-1}{\sqrt{q}+1},1\right)\right\}=\frac{4q}{(\sqrt{q}+1-\zeta\sqrt{q}+\zeta)^2}
      \end{align*}  attained at  $t=\frac{\sqrt{q}-1}{\sqrt{q}+1}$. Hence $\textbf{Ber}_q(C_{\phi})=\Big(0,\frac{4q}{(\sqrt{q}+1-\zeta\sqrt{q}+\zeta)^2}\Big]$, which is also convex.

      Conversely,  let \begin{align*}
    \textbf{ Ber}_q(C_{\phi})=\left\{\frac{q(1-t)^2}{(1-\zeta t)^2}: t\in \left[ \frac{\sqrt{q}-1}{\sqrt{q}+1},1\right)\right\}
 \end{align*} is convex. Clearly, $\textbf{ Ber}_q(C_{\phi})$ is just a path in $\mathbb{C}$. Also, by convexity, it is a line segment or a singleton set. By Lemma \ref{qqpp} $(i)$,  $\textbf{ Ber}_q(C_{\phi})=\{q\}$ if and only if $\zeta=1$. Also, for $\zeta\neq 1$ , $\frac{q(1-t)^2}{(1-\zeta t)^2}=q$ at $t=0$ and $\lim_{t\to 1^-}\frac{q(1-t)^2}{(1-\zeta t)^2}=0$. Thus $\textbf{ Ber}_q(C_{\phi})$ is a line segment passing through the point $q$ and approaching the origin. Also Lemma \ref{qqpp} $(ii)$, we have $\Im(\textbf{ Ber}_q(C_{\phi}))=\{0\}$ if and only if $\Im(\zeta)=0$. As $\zeta\in\overline{ \mathbb{D}}$, we have $\zeta\in [-1,1]$.
 \end{proof}
For $\alpha\in \mathbb{D}$, consider the automorphism of the unit disc $\phi_{\alpha}(z)=\frac{z-\alpha}{1-\overline{\alpha}z}$ and the composition operator $C_{\phi_{\alpha}}$ acting on $A^2(\mathbb{D})$ is defined by $C_{\phi_{\alpha}}f=f\circ \phi_{\alpha}$. The symmetry of the Berezin range of $C_{\phi_{\alpha}}$ on $A^2(\mathbb{D})$ with respect to the real axis was investigated in \cite{Composition}. Here, we study the corresponding symmetry properties of the q-Berezin range for $q\in (0,1)$.
\begin{theorem}\label{rrcc}
    Let $q\in (0,1)$. Then the $q$-Berezin range of $C_{\phi_{\alpha
    }}$ on $A^2(\mathbb{D})$ is symmetric with respect to the real axis.
\end{theorem}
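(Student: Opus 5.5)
We compute the $q$-Berezin symbol of $C_{\phi_\alpha}$ explicitly and then exhibit, for each admissible pair $(\lambda,\mu)$, another admissible pair whose value is the complex conjugate.

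For $\lambda\in\mathbb{D}$ and $\mu\in S_\lambda$, the relation \eqref{q} gives
\begin{align*}
\langle C_{\phi_\alpha}\hat{k}_\lambda,\hat{k}_\mu\rangle=(1-|\lambda|^2)(1-|\mu|^2)\,k_\lambda(\phi_\alpha(\mu))=\frac{q(1-\overline{\lambda}\mu)^2}{\big(1-\overline{\lambda}\phi_\alpha(\mu)\big)^2}.
\end{align*}
Clearing the denominator of $\phi_\alpha$, this becomes
\begin{align*}
\langle C_{\phi_\alpha}\hat{k}_\lambda,\hat{k}_\mu\rangle=\frac{q(1-\overline{\lambda}\mu)^2(1-\overline{\alpha}\mu)^2}{\big(1-\overline{\alpha}\mu-\overline{\lambda}\mu+\overline{\lambda}\alpha\big)^2}.
\end{align*}
By Theorem \ref{kktt}, $\mu\in S_\lambda$ means $\mu=p_\lambda\lambda$ with $p_\lambda\in\{\gamma^\pm_\lambda\}$ real when $\lambda\neq0$, and $|\mu|=\sqrt{1-q}$ when $\lambda=0$. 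In particular $\overline{\lambda}\mu$ is real.

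The natural candidate for the conjugate pair is a reflection. Plain conjugation $(\lambda,\mu)\mapsto(\overline\lambda,\overline\mu)$ sends the value to the conjugate of the symbol of $C_{\phi_{\overline\alpha}}$, not $C_{\phi_\alpha}$, unless $\alpha$ is real. I would therefore use the reflection across the line $\mathbb{R}\alpha$: write $\alpha=|\alpha|e^{i\beta}$ and set $\lambda'=e^{2i\beta}\overline{\lambda}$, $\mu'=e^{2i\beta}\overline{\mu}$.

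The key steps, in order, are these.

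First, check that $(\lambda',\mu')$ is admissible. The quantities $|\lambda'|=|\lambda|$, $|\mu'|=|\mu|$ and $\overline{\lambda'}\mu'=\lambda\overline{\mu}=\overline{\overline{\lambda}\mu}$ are preserved, the last because $\overline{\lambda}\mu$ is real. Hence $\langle\hat{k}_{\lambda'},\hat{k}_{\mu'}\rangle=\overline{q}=q$. The case $\lambda=0$ is covered as well, since the circle $\mathbb{T}_{\sqrt{1-q}}$ is invariant under this reflection.

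Second, check each factor of the formula. Since $\overline{\alpha}=|\alpha|e^{-i\beta}$, we get $\overline{\alpha}\mu'=|\alpha|e^{i\beta}\overline{\mu}=\overline{\overline{\alpha}\mu}$. Similarly $\overline{\lambda'}\alpha=e^{-2i\beta}\lambda|\alpha|e^{i\beta}=\overline{\overline{\lambda}\alpha}$, and $\overline{\lambda'}\mu'=\overline{\overline{\lambda}\mu}$. Every factor in the numerator and denominator is thus replaced by its conjugate, and so
\begin{align*}
\langle C_{\phi_\alpha}\hat{k}_{\lambda'},\hat{k}_{\mu'}\rangle=\overline{\langle C_{\phi_\alpha}\hat{k}_\lambda,\hat{k}_\mu\rangle}.
\end{align*}
It follows that $\textbf{Ber}_q(C_{\phi_\alpha})$ is closed under conjugation, i.e., it is symmetric about the real axis. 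When $\alpha=0$ we take $\beta=0$, and the argument reduces to plain conjugation.

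The main obstacle is identifying the correct conjugate pair, since plain conjugation fails for nonreal $\alpha$. After that, the only delicate point is confirming that the reflected pair still satisfies $\langle\hat{k}_{\lambda'},\hat{k}_{\mu'}\rangle=q$ exactly rather than $\overline{q}$. This holds because $q$ is real and, by Theorem \ref{kktt}, $\overline{\lambda}\mu$ is real.
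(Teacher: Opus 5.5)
Your proof is correct and follows the paper's argument: with $\alpha=\rho e^{i\psi}$, the paper uses the same reflection $(\lambda,\mu)\mapsto(\overline{\lambda}e^{2i\psi},\overline{\mu}e^{2i\psi})$ and checks that it conjugates the $q$-Berezin symbol. You also verify explicitly that the reflected pair still satisfies $\langle\hat{k}_{\lambda'},\hat{k}_{\mu'}\rangle=q$, and you handle the case $\alpha=0$; both are steps the paper leaves implicit.
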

\begin{proof}
        Let $\lambda\in \mathbb{D}$ and $\mu \in S_{\lambda}$, i.e.,  $\langle \hat{k}_{\lambda},\hat{k}_{\mu}\rangle=q$ and $\alpha=\rho e^{i\psi}$.  Then  
        \begin{align*}
            \langle C_{\phi_{\alpha}}\hat{k}_{\lambda},\hat{k}_{\mu} \rangle=&(1-|\lambda|^2)(1-|\mu|^2) \langle C_{\phi_{\alpha}}{k}_{\lambda},{k}_{\mu} \rangle\\
            =& \frac{q(1-\overline{\lambda}\mu)^2}{\big(1-\overline{\lambda}\phi_{\alpha}(\mu)\big)^2}.
        \end{align*} 
        We have to prove 
        \begin{align*}
              \langle C_{\phi_{\alpha}}\hat{k}_{\lambda},\hat{k}_{\mu} \rangle=\overline{  \langle C_{\phi_{\alpha}}\hat{k}_{\overline{\lambda}e^{2i\psi}},\hat{k}_{\overline{\mu}e^{2i\psi}} \rangle}.
        \end{align*} Let $\lambda^{\prime}=\overline{\lambda}e^{2i\psi}$ and $\mu^{\prime}=\overline{\mu}e^{2i\psi}$. Then
\begin{align*}
    \overline{  \langle C_{\phi_{\alpha}}\hat{k}_{\overline{\lambda}e^{2i\psi}},\hat{k}_{\overline{\mu}e^{2i\psi}} \rangle}&=\overline{  \langle C_{\phi_{\alpha}}\hat{k}_{\lambda^{\prime}},\hat{k}_{\mu^{\prime}} \rangle}\\
      =& \frac{q\overline{(1-\overline{\lambda^{\prime}}\mu^{\prime})^2}}{\overline{\big(1-\overline{\lambda^{\prime}}\phi_{\alpha}(\mu^{\prime})\big)^2}}\\
        =& \frac{q(1-\overline{\lambda}\mu)^2}{\big(1-\overline{\lambda}\phi_{\alpha}(\mu)\big)^2}\\
        =&  \langle C_{\phi_{\alpha}}\hat{k}_{\lambda},\hat{k}_{\mu} \rangle.
\end{align*}
        So, our desired result follows.
\end{proof}
 \begin{cor}
     Let $q\in (0,1)$ and $\alpha\in\mathbb{D} $. If the $q$-Berezin range of $C_{\phi_{\alpha}}$ on $A^2(\mathbb{D})$ is convex, then $\Re\{C_{\phi_{\alpha}}\hat{k}_{\lambda},\hat{k}_{\mu}\}\in \textbf{Ber}_q(C_{\phi_{\alpha}})$ for each $\lambda\in \mathbb{D}$ and $\mu \in S_{\lambda}$.
 \end{cor}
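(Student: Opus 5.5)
The proof will copy the argument of the corollary that follows Theorem \ref{symmetric}, now using the symmetry in Theorem \ref{rrcc} as the key input. Fix $\lambda\in\mathbb{D}$ and $\mu\in S_{\lambda}$, and write $\alpha=\rho e^{i\psi}$. Set $\lambda'=\overline{\lambda}e^{2i\psi}$ and $\mu'=\overline{\mu}e^{2i\psi}$.

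The first step is to check that $\mu'\in S_{\lambda'}$, so that $\langle C_{\phi_\alpha}\hat{k}_{\lambda'},\hat{k}_{\mu'}\rangle$ really belongs to $\textbf{Ber}_q(C_{\phi_\alpha})$. We have $|\lambda'|=|\lambda|$ and $|\mu'|=|\mu|$, and
\[
\overline{\lambda'}\mu'=\lambda e^{-2i\psi}\,\overline{\mu}e^{2i\psi}=\overline{\overline{\lambda}\mu}.
\]
By Theorem \ref{kktt} the quantity $\overline{\lambda}\mu$ is real, so $\overline{\lambda'}\mu'=\overline{\lambda}\mu$. Then formula \eqref{q} gives $\langle\hat{k}_{\lambda'},\hat{k}_{\mu'}\rangle=q$. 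This verification is implicitly needed in Theorem \ref{rrcc}; if it were not already settled there, this would be the only nontrivial point of the proof.

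Next, Theorem \ref{rrcc} gives
\[
\overline{\langle C_{\phi_\alpha}\hat{k}_{\lambda'},\hat{k}_{\mu'}\rangle}=\langle C_{\phi_\alpha}\hat{k}_{\lambda},\hat{k}_{\mu}\rangle .
\]
Hence both $w=\langle C_{\phi_\alpha}\hat{k}_{\lambda},\hat{k}_{\mu}\rangle$ and its conjugate $\overline{w}$ lie in $\textbf{Ber}_q(C_{\phi_\alpha})$.

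Finally, since $\textbf{Ber}_q(C_{\phi_\alpha})$ is assumed convex, it contains the midpoint $\tfrac12 w+\tfrac12\overline{w}=\Re\, w$. This is exactly the claim $\Re\langle C_{\phi_\alpha}\hat{k}_{\lambda},\hat{k}_{\mu}\rangle\in\textbf{Ber}_q(C_{\phi_\alpha})$; the statement writes the inner product with braces, meaning the same thing.

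There is no real obstacle: everything beyond the membership check in the first step is immediate from convexity and Theorem \ref{rrcc}.
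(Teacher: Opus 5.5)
Your proposal is correct and follows the paper's proof: Theorem \ref{rrcc} puts both $w=\langle C_{\phi_\alpha}\hat{k}_{\lambda},\hat{k}_{\mu}\rangle$ and $\overline{w}$ (attained at the pair $\overline{\lambda}e^{2i\psi},\overline{\mu}e^{2i\psi}$) in $\textbf{Ber}_q(C_{\phi_\alpha})$, and convexity then gives the midpoint $\Re w$. Your explicit check that $\overline{\mu}e^{2i\psi}\in S_{\overline{\lambda}e^{2i\psi}}$ is correct; the paper leaves this step implicit.
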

 \begin{proof}
     Let $\textbf{Ber}_q(C_{\phi_{\alpha}})$ is convex. Now, from Theorem \ref{rrcc}, as $\textbf{Ber}_q(C_{\phi_{\alpha}})$ is symmetric about real line, then we have \begin{align*}
         \frac{1}{2}   \langle C_{\phi_{\alpha}}\hat{k}_{\lambda},\hat{k}_{\mu} \rangle+\frac{1}{2}  \langle C_{\phi_{\alpha}}\hat{k}_{\overline{\lambda}e^{2i\psi}},\hat{k}_{\overline{\mu}e^{2i\psi}} \rangle=\Re\{C_{\phi_{\alpha}}\hat{k}_{\lambda},\hat{k}_{\mu}\}\in \textbf{Ber}_q(C_{\phi_{\alpha}}).
     \end{align*}
 \end{proof}
 
In \cite{Bulletin des}, the authors have studied the Berezin range of certain weighted shift operators acting on the Hardy–Hilbert space and the Bergman space. Here, we investigate the $q$-Berezin range of weighted shift operators acting on the Bergman space.\\
Let $f(z)=\sum^{\infty}_{n=0}a_nz^n$ belong to the Bergman  $A^2(\mathbb{D})$. The weighted shift operator $T_{\beta}$ acting on $A^2(\mathbb{D})$ is defined as $$T_{\beta}\left(\sum_{n=0}^{\infty}a_nz^n\right)=\sum_{n=0}^{\infty}a_n\beta_{n}z^{n+1},$$
where $\beta=\{\beta_n\}$ is a  sequence in $\mathbb{C}.$ If the sequence $\{\beta_n\}$ is bounded, there exists a constant $K\in\mathbb{R}$ such that $|\beta_n| \leq K,~~ n\in \mathbb N.$ Consequently, we have
  $$\sum_{n=0}^{\infty}\frac{|a_n\beta_{n}|^2}{n+2}\leq K^2\sum_{n=0}^{\infty}\frac{|a_n|^2}{n+1}<\infty.$$ 
  Thus, $T_{\beta}$ defines a bounded linear operator on $A^2(\mathbb{D}).$ In the following result, we consider certain specific sequences and study the convexity and  the symmetry of the $q$-Berezin range of 
$T_{\beta}$ about the real and imaginary axes. 
\begin{theorem}
    Let $q\in(0,1]$ and $T_\beta$ be a weighted shift operator on $A^2(\mathbb{D})$, where $\beta=\{\beta_n\}_{n=0}^{\infty}$ be a bounded real sequence. Then $\textbf{Ber}_q(T_\beta)$ is a disc centered at the origin, hence convex.\label{kpp}
\end{theorem}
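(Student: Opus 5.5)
The plan is to compute the $q$-Berezin symbol of $T_\beta$ explicitly. Then I show that its values are $\rho e^{i\theta}$, where $\theta$ ranges freely over $[0,2\pi)$ and the real amplitude $\rho$ depends only on $|\lambda|$. Finally I show that the set of attained moduli $|\rho|$ is an interval containing $0$.

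Since $k_\lambda(z)=\sum_{n\ge0}(n+1)\overline{\lambda}^n z^n$, we have $T_\beta k_\lambda=\sum_{n\ge0}(n+1)\beta_n\overline{\lambda}^n z^{n+1}$. Using $\langle z^{n+1},k_\mu\rangle=\mu^{n+1}$ and relation \eqref{q}, for $\mu\in S_\lambda$ I get
\begin{align*}
\langle T_\beta\hat{k}_\lambda,\hat{k}_\mu\rangle = q(1-\overline{\lambda}\mu)^2\,\mu\,F(\overline{\lambda}\mu),\qquad F(t)=\sum_{n\ge0}(n+1)\beta_n t^n .
\end{align*}
By Theorem \ref{kktt} there are three kinds of pairs.
\begin{itemize}
\item If $\lambda=0$, then $\mu\in\mathbb{T}_{\sqrt{1-q}}$ and the value is $q\beta_0\mu$. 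These values fill the circle of radius $q\sqrt{1-q}\,|\beta_0|$.
\item If $\lambda=re^{i\theta}\neq0$, then $\mu=\gamma^\pm_\lambda\lambda$ and $t=\overline{\lambda}\mu=\gamma^\pm_\lambda r^2$ is real. Here $\gamma^\pm_\lambda$ depends only on $r$, and $F(t)\in\mathbb{R}$ because the $\beta_n$ are real. So the value is $g_\pm(r)e^{i\theta}$ with the real continuous functions
\begin{align*}
g_\pm(r)=q(1-\gamma^\pm r^2)^2\,\gamma^\pm r\,F(\gamma^\pm r^2),\qquad r\in(0,1).
\end{align*}
\end{itemize}
Continuity of $g_\pm$ holds because $|\gamma^\pm r^2|<1$ by Lemma \ref{oopp}, so $F$ is analytic there.

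Since $\theta$ is free, $\textbf{Ber}_q(T_\beta)=\{\rho e^{i\theta}:\rho\in M,\ \theta\in[0,2\pi)\}$, where
\begin{align*}
M=|g_+((0,1))|\cup|g_-((0,1))|\cup\{q\sqrt{1-q}\,|\beta_0|\}.
\end{align*}
It therefore suffices to show that $M$ is an interval containing $0$. The main step is the connectedness argument.
\begin{itemize}
\item From the formula for $\gamma^\pm_\lambda$ one sees $\gamma^\pm r=\dfrac{qr\pm(1-r^2)\sqrt{1-q}}{1-(1-q)r^2}\to\pm\sqrt{1-q}$ as $r\to0^+$. Also $t\to0$, so $g_\pm(r)\to\pm q\sqrt{1-q}\,\beta_0$. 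Hence both continuous images $|g_\pm((0,1))|$ are intervals whose closures contain the point $q\sqrt{1-q}\,|\beta_0|$, which itself lies in $M$. The union of these three connected pieces is therefore connected, i.e. $M$ is an interval.
\item For $0\in M$: the numerator of $\gamma^-r$ vanishes at the unique $r_0\in(0,1)$ with $qr_0=(1-r_0^2)\sqrt{1-q}$. There $\mu=0$ and $g_-(r_0)=0$.
\end{itemize}
Thus $M$ is an interval $[0,a)$ or $[0,a]$, and $\textbf{Ber}_q(T_\beta)$ is a disc (possibly open, closed, or degenerate) centered at $0$, hence convex.

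The case $q=1$ is handled separately: there $\gamma^\pm=1$ and $\mu=\lambda$. The value is $(1-r^2)^2\,r\,F(r^2)e^{i\theta}$ for $r\in(0,1)$, and $\lambda=0$ gives the value $0$. So $M$ is the continuous image of $[0,1)$, again an interval containing $0$.

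The delicate point I expect is the bookkeeping of the degenerate $\lambda=0$ circle in the case $q<1$. It is not itself a disc, so it is essential to show it is glued continuously onto the two branch families; the limits $r\to0^+$ computed above are exactly what does this.
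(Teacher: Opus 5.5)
Your proof is correct and takes the same route as the paper: the same formula $q(1-\overline{\lambda}\mu)^2\mu\sum(n+1)\beta_n(\overline{\lambda}\mu)^n$, the same split into the $\lambda=0$ circle and the two real branches $\nabla_\pm(|\lambda|)e^{i\theta}$, rotation invariance, and the limits $\pm q\sqrt{1-q}\,\beta_0$ as $|\lambda|\to 0^+$. Two of your steps are ones the paper skips when it concludes the range is a disc: the connectedness argument for the set of moduli, and the observation that $g_-$ vanishes at $r_0=\sqrt{1-q}$, where $\mu=0$, so that $0$ is attained.
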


\begin{proof}
     Let $\lambda=|\lambda|e^{i\theta}\in \mathbb{D}$ and $\mu \in S_{\lambda}$, i.e.,  $\langle \hat{k}_{\lambda},\hat{k}_{\mu}\rangle=q$. Then  
        \begin{align}
         \langle T_\beta\hat{k}_{\lambda},\hat{k}_{\mu}\rangle&=(1-|\lambda|^2)(1-|\mu|^2)\langle T_{\beta}k_{\lambda}, k_{\mu}\rangle\nonumber\\
         &=q(1-\overline{\lambda}\mu)^2\mu\sum_{n=0}^{\infty}(n+1)\beta_n(\overline{\lambda}\mu)^n.\label{suva2}\end{align} 
         Let $q\in(0,1)$. If $\lambda=0$, then $\langle T_\beta\hat{k}_{\lambda},\hat{k}_{\mu}\rangle=q\beta_0\mu\in q\beta_0\mathbb{T}_{\sqrt{1-q}} $.

        For $\lambda\neq 0$,  we have $\mu=p_{\lambda} \lambda$, where $p_{\lambda}\in \{\gamma_{\lambda}^+,\gamma_{\lambda}^-\}$.  Assume first that $p_{\lambda}=\gamma_{\lambda}^+$.  Then \begin{align*}
             \langle T_\beta\hat{k}_{\lambda},\hat{k}_{\mu}\rangle&=q(1-\gamma_{\lambda}^+|\lambda|^2)^2\gamma_{\lambda}^+|\lambda|\sum_{n=0}^{\infty}(n+1)\beta_n(\gamma_{\lambda}^+|\lambda|^2)^n e^{i\theta}\\
             &=\nabla_{+}(|\lambda|)e^{i\theta},
         \end{align*}
 where the function $\nabla_+: (0,1)\to \mathbb{R}$ are defined by \begin{align*}
             \nabla_+(|\lambda|)=q(1-\gamma_{\lambda}^+|\lambda|^2)^2\gamma_{\lambda}^+|\lambda|\sum_{n=0}^{\infty}(n+1)\beta_n(\gamma_{\lambda}^+|\lambda|^2)^n.
         \end{align*} 
      
Similarly, when $p_{\lambda}= \gamma_\lambda^-$, we obtain $ \langle T_\beta\hat{k}_{\lambda},\hat{k}_{\mu}\rangle=\nabla_{-}(|\lambda|)e^{i\theta}$, where 
         \begin{align*}
             \nabla_-(|\lambda|)=q(1-\gamma_{\lambda}^-|\lambda|^2)^2\gamma_{\lambda}^-|\lambda|\sum_{n=0}^{\infty}(n+1)\beta_n(\gamma_{\lambda}^-|\lambda|^2)^n.
         \end{align*} 
          Now, we have 
         \begin{align*}
             \textbf{Ber}_q(T_\beta)=&\left\{\nabla_+(|\lambda|)e^{i\theta}:0<|\lambda|<1, 0\le \theta<2\pi\right\}\\
           & \cup \left\{\nabla_-(|\lambda|)e^{i\theta}:0<|\lambda|<1, 0\le \theta<2\pi\right\}\cup q\beta_0\mathbb{T}_{\sqrt{1-q}}. 
         \end{align*}
         
         Let $\eta(\neq 0)\in  \textbf{Ber}_q(T_\beta)$ then $\eta$ be equal to either $\nabla_+(|\lambda_1|)e^{i\theta_1}$ or $\nabla_-(|\lambda_2|)e^{i\theta_2}$ for some $0<|\lambda_1|,|\lambda_2|<1$ and $0\le \theta_1,\theta_2<2\pi$.  Now, for any $\xi\in [0,2\pi)$,
         $\eta e^{\xi}\in\textbf{Ber}_q(T_\beta)$. Moreover, $\lim_{|\lambda|\to 0^+}\nabla_+(|\lambda|)=q\beta_0\sqrt{1-q}$ and $\lim_{|\lambda|\to 0^+}\nabla_-(|\lambda|)=-q\beta_0\sqrt{1-q}$. It follows that $\textbf{Ber}_q(T_\beta)$ is a circular disc with center at origin. Therefore, $\textbf{Ber}_q(T_\beta)$ is convex.\\
      When $q=1$, we have $\lambda=\mu$. Thus from $\eqref{suva2}$ we get 
      \begin{align*}
          \textbf{Ber}_q(T_\beta)=\Big\{(1-|\lambda|^2)^2|\lambda|\sum_{n=0}^{\infty}(n+1)\beta_n|\lambda|^{2n}e^{i\theta}:0\le|\lambda|<1, 0\le \theta<2\pi\Big\}.
      \end{align*} Now, $\lim_{|\lambda|\to 0^+}(1-|\lambda|^2)^2|\lambda|\sum_{n=0}^{\infty}(n+1)\beta_n|\lambda|^{2n}=0$. Hence, the desired conclusion follows.
\end{proof}

\begin{theorem}
    Let $T_{\beta}$ be the weighted shift operator on $A^2(\mathbb{D})$,  where $\beta=\{\beta_n\}_{n=0}^{\infty}$ be a purely imaginary  bounded  sequence and $q\in(0,1]$. 
 Then $\textbf{Ber}_q(T_{\beta})$ is symmetric with respect to the imaginary axis. 
\end{theorem}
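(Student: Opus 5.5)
Write $\beta_n=i b_n$ with $b_n\in\mathbb{R}$. Symmetry about the imaginary axis means that $w\in\textbf{Ber}_q(T_\beta)$ implies $-\overline{w}\in\textbf{Ber}_q(T_\beta)$. The plan is to produce an explicit pair of points realizing $-\overline{w}$, exactly as in the proof of Theorem \ref{rrcc}.

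The starting point is formula \eqref{suva2}, which holds for any bounded sequence:
\[
\langle T_\beta\hat{k}_{\lambda},\hat{k}_{\mu}\rangle=q(1-\overline{\lambda}\mu)^2\,\mu\sum_{n=0}^{\infty}(n+1)\beta_n(\overline{\lambda}\mu)^n .
\]
By Theorem \ref{kktt}, $\overline{\lambda}\mu$ is real for every admissible pair.

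Given $\lambda\in\mathbb{D}$ and $\mu\in S_\lambda$, the candidate pair is $\lambda'=-\overline{\lambda}$ and $\mu'=-\overline{\mu}$. The steps are as follows.

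\begin{enumerate}[\upshape (1)]
\item Show $\mu'\in S_{\lambda'}$. We have $|\lambda'|=|\lambda|$, $|\mu'|=|\mu|$, and $\overline{\lambda'}\mu'=\lambda\overline{\mu}=\overline{\overline{\lambda}\mu}=\overline{\lambda}\mu$, the last equality because this quantity is real. Hence relation \eqref{q} is preserved.
\item Compute the new value. Since the real quantity $t=\overline{\lambda}\mu$ is unchanged, and $\beta_n=ib_n$ gives $\overline{\beta_n}=-\beta_n$,
\[
\langle T_\beta\hat{k}_{\lambda'},\hat{k}_{\mu'}\rangle=q(1-t)^2(-\overline{\mu})\sum_{n}(n+1)\beta_n t^n=-\overline{\,q(1-t)^2\mu\sum_n (n+1)\overline{\beta_n}\,t^n\,}=\overline{\langle T_\beta\hat{k}_{\lambda},\hat{k}_{\mu}\rangle}.
\]
\end{enumerate}

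This is the first place the computation needs care. The conjugation sends $\beta_n$ to $\overline{\beta_n}=-\beta_n$, and this sign combines with the minus from $-\overline{\mu}$. If step (2) really produces $\overline{w}$ rather than $-\overline{w}$, then the choice $\lambda'=-\overline{\lambda}$ is wrong and must be changed.

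The fix in that case is to take $\lambda'=\overline{\lambda}$ and $\mu'=\overline{\mu}$ instead. The same reality of $t$ shows $\mu'\in S_{\lambda'}$, and the value becomes
\[
q(1-t)^2\,\overline{\mu}\sum_n (n+1)\beta_n t^n=-\overline{\,q(1-t)^2\mu\sum_n(n+1)\beta_n t^n\,}=-\overline{w}.
\]
Here the minus sign comes from $\overline{\beta_n}=-\beta_n$. So the correct reflection is complex conjugation of $(\lambda,\mu)$, and the purely imaginary hypothesis supplies the sign. This sign bookkeeping is the only real obstacle, and I would settle it by testing on $\beta_n=i$ for all $n$.

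Finally, check the degenerate cases. When $\lambda=0$, we have $\mu\in\mathbb{T}_{\sqrt{1-q}}$, which is closed under conjugation, and the value is $q\beta_0\mu\mapsto q\beta_0\overline{\mu}=-\overline{q\beta_0\mu}$. When $q=1$, we have $\mu=\lambda$, and the same map $\lambda\mapsto\overline{\lambda}$ works. Since $w\mapsto -\overline{w}$ is an involution, this establishes symmetry about the imaginary axis.
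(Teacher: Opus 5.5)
Your final argument is correct and is essentially the paper's proof. The paper also uses the conjugation $(\lambda,\mu)\mapsto(\overline{\lambda},\overline{\mu})$, which preserves $\langle\hat{k}_{\lambda},\hat{k}_{\mu}\rangle=q$ and, because $\overline{\beta_n}=-\beta_n$, sends $w$ to $-\overline{w}$; the cases $\lambda=0$ and $q=1$ are handled just as you do. Your step (2) computation for the first candidate $(-\overline{\lambda},-\overline{\mu})$ is right and gives $\overline{w}$, the reflection in the real axis (which also holds, since the range is rotation invariant), so in the write-up drop that false start and present only the conjugation map.
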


\begin{proof}
  Let $\lambda\in \mathbb{D}$ and $\mu\in S_{\lambda}$, i.e.,  $\langle \hat{k}_{\lambda},\hat{k}_{\mu}\rangle=q$. Then we have  
        \begin{align}
         \langle T_{\beta}\hat{k}_{\lambda},\hat{k}_{\mu}\rangle=q(1-\overline{\lambda}\mu)^2 \mu\sum_{n=0}^{\infty}(n+1)\beta_{n}(\overline{\lambda}\mu)^n.\label{suva}
         \end{align}  
        Let $q\in(0,1)$. If $\lambda=0$, then 
         \begin{align*}
              \{\langle T_{\beta}\hat{k}_{0},\hat{k}_{\mu}\rangle: \mu \in S_0\}=q \beta_0\mathbb{T}_{\sqrt{1-q}},
         \end{align*}
        
        which is symmetric about the imaginary axis.\\ Now, suppose that $\lambda\neq 0$. Then  $\mu=p_{\lambda} \lambda$, where $p_{\lambda}\in \{\gamma_{\lambda}^+,\gamma_{\lambda}^-\}$. Consequently,
         \begin{align*}
               \langle T_{\beta}\hat{k}_{\lambda},\hat{k}_{\mu}\rangle=q(1-p_{\lambda}|\lambda|^2)^2 p_{\lambda}\lambda\sum_{n=0}^{\infty}(n+1)\beta_{n}p_{\lambda}^n|\lambda|^{2n}.
         \end{align*}

        Let  $ \langle T_{\beta}\hat{k}_{\lambda},\hat{k}_{\mu}\rangle= a+ib$. Since $\{\beta_n\}$  is a bounded purely imaginary sequence, it follows that  $-a+ib=\langle T_{\beta}\hat{k}_{\overline{\lambda}},\hat{k}_{\overline{\mu}}\rangle.$ \\Thus, whenever $a+ib\in \textbf{Ber}_q(T_{\beta})$, the point $-a+ib$  also belongs to $\textbf{Ber}_q(T_{\beta})$.
        \\
        For $q=1$, we have $\lambda=\mu$. In this case,  the result follows similarly from \eqref{suva}.
     
\end{proof}
\section{Conclusion} This paper introduces $q$-Berezin sectorial operators and highlights the effectiveness of the $q$-Berezin sectorial framework through various inequalities involving the $q$-Berezin number.
The notion of $q$-Berezin sectorial operators also opens up several  promising directions  for future research, particularly in developing sharper bounds of the $q$-Berezin number.
This paper also investigates the $q$-Berezin range of several classes of bounded linear operators on the
Bergmann space. Future investigations may focus on extending these findings for other reproducing kernel Hilbert spaces.

\section*{Declarations}	
\textit{Acknowledgements.}
 Mr. Saikat Mahapatra 
would like to thank the UGC, Govt. of India, for the financial support (NTA Ref. No. 211610170555) in the form of a fellowship. Mr. Subhadip Halder would like to thank the UGC, Govt. of India, for the financial support (NTA Ref. No. 211610189204) in the form of a fellowship.\\
\textit{Author Contributions:} All the authors contributed equally to this manuscript and reviewed it. \\
 \textit{Data Availability:} No datasets were generated or analysed during the current study. \\
\textit{Conflict of interest:} There is no competing interest.\\

 \bibliographystyle{amsplain}

\end{document}